\newcommand{\dd}{\mathrm{d}}
\newcommand{\R}{\mathbb{R}}
\newcommand{\N}{\mathbb{N}}
\newcommand{\E}{\mathbb{E}}
\newcommand{\p}{\mathbb{P}}
\newtheoremstyle{neu-theorem}
{11pt}      
{11pt}      
{\itshape}                  
{}          
{\bfseries} 
{}          
{1em}  
{\textbf{\thmname{#1}\thmnumber{ #2}\thmnote{ (#3)}.}}          
\theoremstyle{neu-theorem}
\newtheorem{theorem}{Theorem}[section]
\newtheorem{corollary}[theorem]{Corollary}
\newtheorem{lemma}[theorem]{Lemma}
\newtheorem{proposition}[theorem]{Proposition}
\newtheorem{theoremA}{Theorem}
\newtheoremstyle{neu}
{11pt}      
{11pt}      
{}                  
{}          
{\bfseries} 
{}          
{1em}  
{\textbf{\thmname{#1}\thmnumber{ #2}\thmnote{ (#3)}.}}          
\theoremstyle{neu}
\newtheorem{definition}[theorem]{Definition}
\newtheorem{example}[theorem]{Example}
\newtheorem{remark}[theorem]{Remark}
\newtheorem{assumption}{Assumption}
\newcommand{\bfi}{\begin{fig}}
	\newcommand{\efi}{\end{fig}}
\newcommand{\btab}{\begin{tab}}
	\newcommand{\etab}{\end{tab}}
\newcommand{\barr}{\begin{array}}
	\newcommand{\earr}{\end{array}}
\newcommand{\beq}{\begin{equation}}
\newcommand{\eeq}{\end{equation}}
\newcommand{\bdis}{\begin{displaymath}}
\newcommand{\edis}{\end{displaymath}\noindent}
\newcommand{\bbn}{\mathbb{N}}
\newcommand{\bbr}{\mathbb{R}}
\newcommand{\bbe}{\mathbb{E}}
\newcommand{\bbp}{\mathbb{P}}
\newcommand{\bone}{\mathds 1}
\newcommand{\cali}{{\cal I}}
\newcommand{\al}{{\alpha}}
\newcommand{\la}{{\lambda}}
\newcommand{\La}{{\Lambda}}
\newcommand{\eps}{{\varepsilon}}
\newcommand{\si}{{\sigma}}
\newcommand{\om}{{\omega}}
\newcommand{\ov}{\overline}
\newcommand{\Comp}{\mathrm{c}}
\definecolor{orange}{rgb}{1,0.5,0}
\newcommand{\Leb}{\mathrm{Leb}}
\newcommand{\lb}{\left[}
\newcommand{\rb}{\right]}
\newcommand{\lp}{\left(}
\newcommand{\rp}{\right)}
\newcommand{\lv}{\left|}
\newcommand{\rv}{\right|}
\newcommand{\as}{\quad\text{a.s.}}
\newcommand{\bthm}{\begin{theorem}}
	\newcommand{\ethm}{\end{theorem}}
\newcommand{\bthmA}{\begin{theoremA}}
	\newcommand{\ethmA}{\end{theoremA}}
\newcommand{\bcor}{\begin{corollary}}
	\newcommand{\ecor}{\end{corollary}}
\newcommand{\blem}{\begin{lemma}}
	\newcommand{\elem}{\end{lemma}}
\newcommand{\bprop}{\begin{proposition}}
	\newcommand{\eprop}{\end{proposition}}
\newcommand{\bdf}{\begin{definition}}
	\newcommand{\edf}{\end{definition}}
\newcommand{\bex}{\begin{example}}
	\newcommand{\eex}{\end{example}}
\newcommand{\brem}{\begin{remark}}
	\newcommand{\erem}{\end{remark}}
\newcommand{\bass}{\begin{assumption}}
	\newcommand{\eass}{\end{assumption}}
\newcommand{\bpr}{\begin{proof}}
	\newcommand{\epr}{\end{proof}}
\newcommand{\benu}{\begin{enumerate}}
	\newcommand{\eenu}{\end{enumerate}}
\newcommand{\bit}{\begin{itemize}}
	\newcommand{\eit}{\end{itemize}}
\newcommand{\itt}{\textit}
\numberwithin{equation}{section}
\title{The almost-sure asymptotic behavior of the solution to
the stochastic heat equation with Lévy noise}
\author{
Carsten Chong\thanks{Institut de math\'ematiques, \'Ecole Polytechnique F\'ed\'erale de 
Lausanne, Station 8, CH-1015 Lausanne, e-mail: carsten.chong@epfl.ch} \and P\'eter 
Kevei\thanks{{Bolyai Institute, University of Szeged,
Aradi v\'ertan\'uk tere 1, 6720 Szeged, Hungary, 
e-mail: kevei@math.u-szeged.hu}}
}
\date{}
\begin{document}
\maketitle
\begin{abstract} 
We examine the almost-sure asymptotics of the solution to the stochastic heat equation 
driven by a Lévy space-time white noise. When a spatial point is fixed and time tends to infinity, we 
show that the solution develops unusually high peaks over short time intervals, even in the case of additive noise, which leads to 
a breakdown of an intuitively expected strong law of large numbers. More precisely, if 
we normalize the solution by an increasing nonnegative function, we either obtain 
convergence to $0$, or the limit superior and/or inferior will be infinite.
A detailed analysis of the jumps further reveals that the strong law of large 
numbers can be recovered on discrete sequences of time points increasing to infinity. This 
leads to a necessary and sufficient condition that depends on the Lévy measure of the 
noise and the growth and concentration properties of the sequence at the same time. 
Finally, we show that our results generalize to the stochastic heat equation with a 
multiplicative nonlinearity that is bounded away from zero and infinity.
\end{abstract}

\vspace{\baselineskip}
\noindent
\begin{tabbing}
{\em AMS 2010 Subject Classifications:}  \,\,\,\,\,\, 60H15, 60G17, 60F15, 35B40, 60G55 
\end{tabbing}
\vspace{\baselineskip}
\noindent
{\em Keywords:} additive intermittency; almost-sure asymptotics; integral test; Lévy noise; Poisson noise; stochastic 
heat equation; stochastic PDE; strong law of large numbers.

\vspace{\baselineskip}

\section{Introduction}

Consider the stochastic heat equation on $\R^d$ driven by a Lévy space-time  
white noise  $\dot \La$, with zero initial condition:
\beq\label{SHE}
\begin{split}
\partial_t Y(t,x) &= \Delta Y(t,x) + \si(Y(t,x))\dot\La (t,x),\qquad 
(t,x)\in(0,\infty)\times\bbr^d,\\
Y(0,\cdot)&= 0,
\end{split}
\eeq
where $\si\colon\R\to(0,\infty)$ is a Lipschitz continuous function that is bounded away from $0$ and infinity. The purpose of this paper is to report on some unexpected 
asymptotics of the solution $Y(t,x)$, for some fixed spatial point $x\in\R^d$, as time 
tends to infinity.

In order to describe the atypical behavior we encounter, let us consider in this introductory part the simplest possible situation where $\si\equiv1$ and $\dot \La$ is a standard Poisson noise, that is, $\dot \La = \sum_{i=1}^\infty \delta_{(\tau_i,\eta_i)}$ is a sum of Dirac delta functions at random space-time points $(\tau_i,\eta_i)$ that are determined by a standard Poisson point process on $[0,\infty)\times\R^d$. In this case, $Y(t,\cdot)$ can be interpreted as the density at time $t$ of a random measure describing particles that are placed according to the point process and perform independent $d$-dimensional Brownian motions.

As the mild solution $Y$ to \eqref{SHE} in this simplified case takes the form 
\beq\label{eq:mild}
Y(t,x)  = \int_0^t \int_{\R^d} g(t-s, x-y)\, \Lambda ( \dd s, \dd y) = \sum_{i=1}^\infty g(t-\tau_i,x-\eta_i)\bone_{\tau_i<t},
\eeq
where 
\begin{equation} \label{eq:def-heat}
g(t,x) = \frac{1}{(4 \pi t)^{d/2}} e^{-\frac{|x|^2}{4 t}}\bone_{t>0}, \qquad
(t,x) \in [0,\infty)\times \R^d,
\end{equation}
is the heat kernel in $d$ dimensions ($|\cdot|$ denotes the Euclidean norm in $\R^d$), we immediately see that 
\beq\label{eq:exp} 
\bbe[Y(t,x)] = \int_0^t\int_{\R^d} g(t-s,x-y)\,\dd y\,\dd s
= \int_0^t 1 \,\dd s = t,\qquad 
(t,x)\in[0,\infty)\times\R^d.
\eeq
Hence, one expects to have a \emph{strong law of large numbers} (SLLN) as $t\to\infty$ in the sense that for fixed $x\in\R^d$, we have
\[ \lim_{t\to\infty} \frac{Y(t,x)}{t}  = 1 \quad\text{a.s.} \]

The starting point of this paper is the observation that the last statement turns out to 
be \emph{false}. Let us consider without loss of generality the point $x=(0,\dots,0)$ 
and write 
\beq\label{eq:Y0} Y_0(t)=Y(t,0),\eeq 
which is a process with almost surely smooth sample paths by \cite[Théorème~2.2.2]{SLB98}.
\bthmA\label{thm:A} 
Let $f\colon (0,\infty) \to (0, \infty)$ be a nondecreasing function, $\si\equiv1$, $Y_0$ be given by \eqref{eq:Y0}, and $\dot \La$ be a 
standard Poisson noise. Then, with probability one, we have
\[
\limsup_{t \to \infty} \frac{Y_0(t)}{f(t)} = \infty \qquad\text{or}\qquad 
\limsup_{t \to \infty} \frac{Y_0(t)}{f(t)} =0,
\]
according to whether 
\[
\int_1^\infty \frac{1}{f(t)}\, \dd t = \infty\qquad \text{or}\qquad \int_1^\infty 
\frac{1}{f(t)}\, \dd t < \infty.
\]
Furthermore, we almost surely have
\[
\liminf_{t \to \infty} \frac{Y_0(t)}{t} = 1.
\]
\ethmA
In other words, while the limit inferior follows the expected SLLN, 
the integral test for the limit superior shows that there is no natural nonrandom normalization that would ensure a nontrivial limit. 
For example, we have 
\[ \limsup_{t\to\infty} \frac{Y_0(t)}{t} = \limsup_{t\to\infty} \frac{Y_0(t)}{t\log t} = \infty\qquad \text{but}\qquad \limsup_{t\to\infty} \frac{Y_0(t)}{t(\log t)^{1.1}} =0 \]
almost surely
 This kind of 
 phenomenon is common for stochastic processes with \itt{infinite expectation}; 
 see, for instance, \cite[Theorem 2]{ChowRobbins} for the case of i.i.d.\ sums and
 \cite[Theorem III.13]{Bertoin} for the case of subordinators (i.e., nonnegative Lévy processes). But it is unusual in our case because $Y_0$ does have a finite 
 expectation by \eqref{eq:exp} (in fact, even a finite variance if $d=1$). 
 
With Gaussian noise, we do not have such irregular behavior but a proper limit theorem:
\bthmA\label{thm:B} Suppose that $\si\equiv1$ and $\dot \La$ is a Gaussian space-time white noise in one spatial dimension. Then the following \emph{law of the iterated logarithm} holds for $Y_0$ from \eqref{eq:Y0}:
\[ \limsup_{t\to\infty} \frac{Y_0(t)}{(2t/\pi)^{1/4}\sqrt{\log \log t}} = -\liminf_{t\to\infty} \frac{Y_0(t)}{(2t/\pi )^{1/4}\sqrt{\log \log t}} = 1\as \]
In particular, the SLLN holds:
\[ \lim_{t\to\infty} \frac{Y_0(t)}{t} = 0\as \]
\ethmA
This theorem follows easily from the general theory on the growth of Gaussian processes \cite{Watanabe70}. Although it is known that $Y_0(t)$ in the Gaussian case locally looks  like a fractional Brownian motion with Hurst parameter $\frac14$ (see \cite[Theorem~3.3]{Khos}), we could not find the corresponding global statement specifically for the stochastic heat equation. Hence, we give a short proof of Theorem~\ref{thm:B} at the end of Section~\ref{sect:mr}.

Back to the Lévy case, the exact sample path behavior of $Y_0$ is even more complex than described by Theorem~\ref{thm:A}. Our proofs will reveal that the failure of the SLLN for the limit superior is due to the jumps that occur in a short space-time distance to $(t,x)=(t,0)$.
However, these problematic jumps that cause the deviation from the SLLN only have a very short impact. In fact, if we only observe $Y_0$ on discrete time points, say, at $t=t_n=n^p$, for $n\in\N$ and some $p>0$, then we have the following result:
\bthmA\label{thm:C} For $Y_0$ from \eqref{eq:Y0}, if $t_n=n^p$ for some $p>d/(d+2)$, we have
\[ \lim_{n\to\infty} \frac{Y_0(t_n)}{t_n}  = 1\quad\text{a.s.},\]
while for $0<p\leq d/(d+2)$, we have
\[ \limsup_{n\to\infty} \frac{Y_0(t_n)}{t_n}  = \infty\qquad\text{and}\qquad \liminf_{n\to\infty} \frac{Y_0(t_n)}{t_n}  = 1\quad\text{a.s.}\] 
\ethmA

So if we sample the solution on a fast sequence (``large $p$''), those problematic jumps are not visible, and the SLLN \emph{does} hold true. If the sequence is too slow (``small $p$''), they are visible (as in the continuous-time case), and the SLLN fails. Let us make the following observations:
\benu
\item The SLLN holds on the sequence $t_n = n$ in any dimension.
\item For any $0<p<1$, the SLLN will fail on the sequence $t_n=n^p$ in sufficiently high dimension.
\item For any dimension $d\geq1$, if we take $p\in(d/(d+2),1)$, we obtain with $t_n=n^p$ a sequence whose increments $\Delta t_n = t_n-t_{n-1} = n^p-(n-1)^p = O(n^{p-1})$ converge to $0$ as $n\to\infty$, but on which the SLLN still holds. Together with Theorem~\ref{thm:A}, this means that between infinitely many consecutive points of the sequence $t_n$, which get closer and closer and where $Y_0$ is of order $t_n$, there are time points where $Y_0$ is significantly larger.
\eenu

In Figure \ref{fig} we see a simulated path of $t\mapsto Y_0(t)$ for $t \in [0,200]$ and its restriction to the sequences $n$, $n^{0.5}$, and $n^{0.3}$, respectively. While unusually large peaks are clearly visible in the plots of $t\mapsto Y_0(t)$ and $n^{0.3}\mapsto Y_0(n^{0.3})$, only small deviations from the linear growth of $Y_0$ are observed on the sequences $n$ and $n^{0.5}$. This is in agreement with the theoretical considerations above because in dimension $1$, the SLLN holds on the sequence $n^p$ for all $p>\frac13$, while it fails for all $p\leq \frac13$ and in continuous time. 

\begin{figure} \label{fig}
	\includegraphics[width=\linewidth]{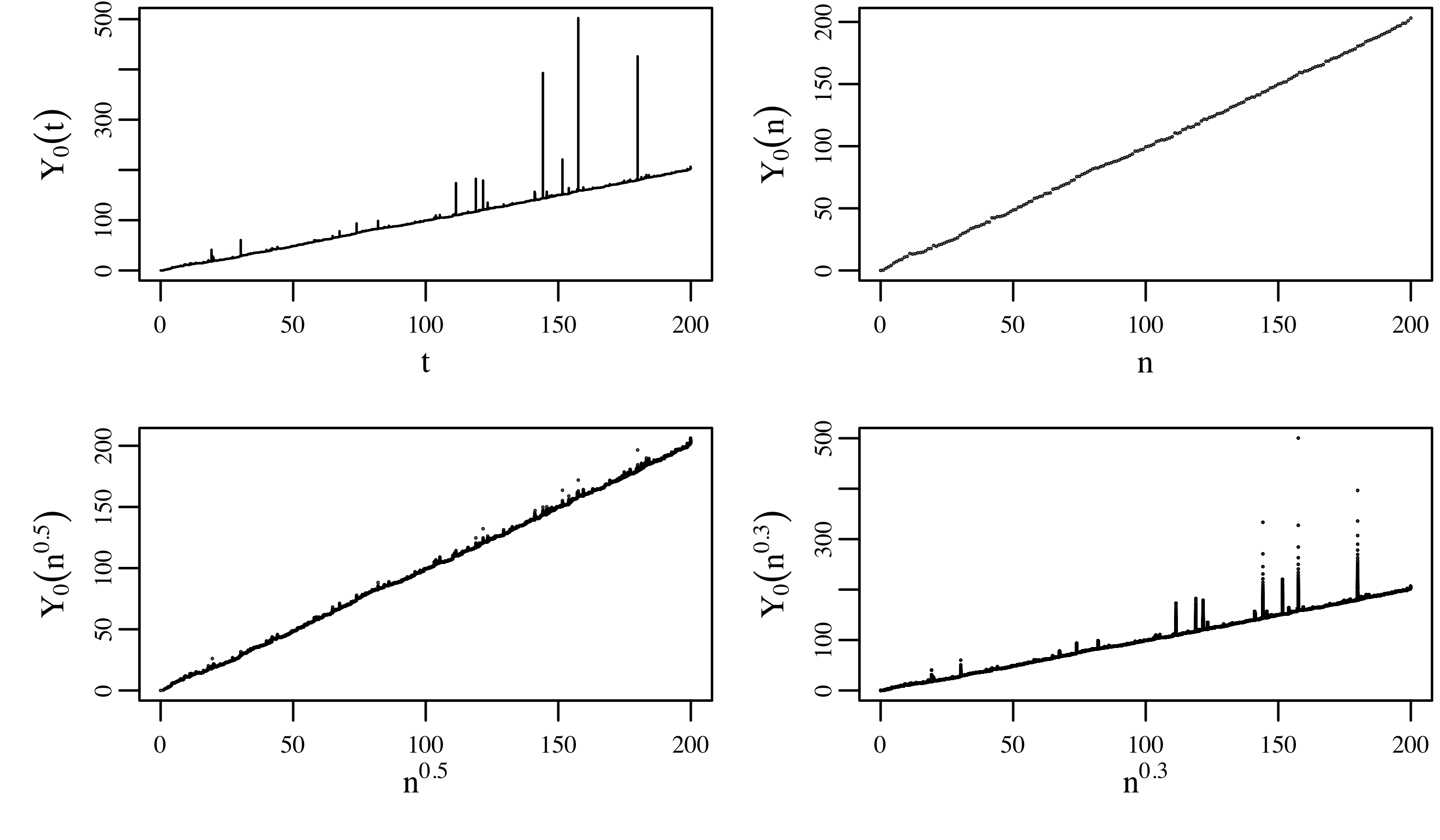}
	%
	
	\vspace{-\baselineskip}
	\caption{A simulated sample path of $t\mapsto Y_0(t)$, $n\mapsto Y_0(n)$, $n^{0.5}\mapsto Y_0(n^{0.5})$, and $n^{0.3}\mapsto Y_0(n^{0.3})$ under the same realization of a standard Poisson noise in dimension $d=1$. Using Proposition~\ref{prop:small-box}, we have approximated the contribution of jumps with a distance of more than $5$ from $x=0$ by the mean. For the plot of $t\mapsto Y_0(t)$, we have used an equidistant grid of step size $0.01$, and have further included for each jump, the time point of the induced local maximum according to Lemma~\ref{lem:hk} in the simulation grid.}
\end{figure}

A similar dichotomy is also found in Figure~\ref{fig2}, which suggests that the averages $Y_0(n)/n$ and $Y_0(n^{0.5})/n^{0.5}$ stabilize at the mean $1$ for large values of $n$, whereas in continuous time or on the sequence $n^{0.3}$, significant deviations from the mean are repeatedly observed at isolated time points.

\begin{figure} 
	\includegraphics[width=\textwidth]{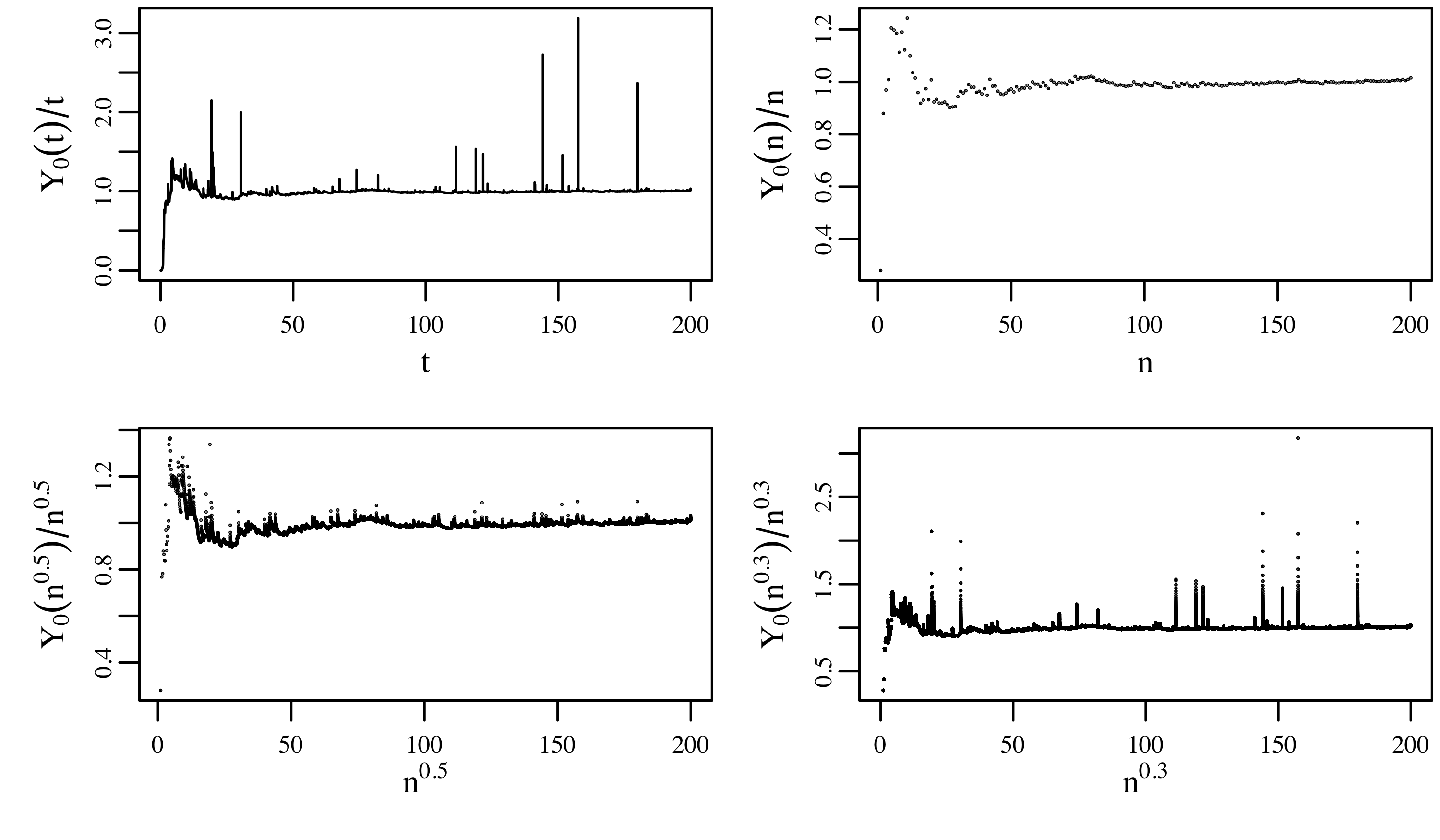}
%

		\vspace{-\baselineskip}
		\caption{The averages $t\mapsto Y_0(t)/t$, $n\mapsto Y_0(n)/n$, $n^{0.5}\mapsto Y_0(n^{0.5})/n^{0.5}$ and $n^{0.3}\mapsto Y_0(n^{0.3})/n^{0.3}$ for the sample path from Figure~\ref{fig}.}
 \label{fig2}
\end{figure}


\medskip

Let us interpret these results in a larger context. In the analysis of random fields, many 
different authors have studied  the phenomenon of \emph{intermittency}. Originating from 
the physics literature on turbulence (see \cite[Chapter~8]{Frisch95}), it refers to the 
chaotic behavior of a random field that develops unusually high peaks over small areas. 

Concerning the stochastic heat equation, it is well known from \cite{Bertini95, Foondun09, 
Khos} that the solution to \eqref{SHE} driven by a Gaussian space-time white noise in dimension $1$ is not intermittent if $\si$ is a bounded function, while it is intermittent if $\si$ has linear growth. Here, intermittency, or 
more precisely, \emph{weak intermittency} is mathematically defined as the exponential 
growth of the moments of the solution. However, the translation of this purely moment-based notion 
of intermittency to a pathwise description of the exponentially large peaks of the solution, sometimes referred to as 
\emph{physical intermittency}, has not been fully resolved yet; see 
\cite[Section~2.4]{Bertini95} or \cite[Chapter~7.1]{Khos} for some heuristic arguments. Despite recent results of 
\cite{Khoshnevisan18} on the multifractal nature of the space-time peaks of the solution, 
the exact almost-sure asymptotics of the solution as time tends to infinity, for fixed 
spatial location, are still unknown. To our best knowledge, only a weak law of large numbers has been proved rigorously for certain initial conditions when $\si$ is a linear function; see \cite{Bertini99} and, in particular, \cite{Amir11, Corwin16}, where much deeper fluctuation results  were obtained. Let us also mention that for fixed time,
 the almost-sure behavior of the solution in 
space  has been resolved in \cite{Conus13, Khoshnevisan17}.

However, in the case of additive Gaussian noise,  Theorem~\ref{thm:B} does reveal the pathwise  asymptotics of the solution: it obeys the law of the iterated logarithm and is therefore not physically intermittent. But what about the case of \emph{Lévy} noise? As Theorem~\ref{thm:A} and the last statement in (3) above show, the solution develops high peaks over very short periods of time. So this leads us to the question:
\[ \text{\emph{Is the solution $Y$ to \eqref{SHE} with Lévy noise physically intermittent?}}\]
Certainly not in the sense of exponential growth of the solution because $\si$ is a bounded function (so multiplicative effects cannot build up). But it seems appropriate to say that $Y$ exhibits \emph{additive physical intermittency}. We use the attribute ``additive'' to describe the fact that the tall peaks of $Y_0(t)$ do not arise through a multiplicative cascade of jumps, or the accumulation of past peaks, but rather through the effect of single isolated jumps. 

That additive physical intermittency only occurs with jump noise, but not with Gaussian 
noise, is in line with \cite{Chong18}, where we have shown that for the heat equation with 
multiplicative Lévy noise, weak intermittency occurs on a much larger scale than under 
Gaussian noise.

Let us mention that a weak (i.e., moment-based) version of additive intermittency has 
been introduced in a series of papers \cite{Grahovac18,Grahovac16,Grahovac18a} on 
superpositions of Ornstein--Uhlenbeck processes. The term ``additive intermittency'' 
itself was coined by  Murad S. Taqqu in private communication with the first author discussing the references above.

The remaining paper is organized as follows. In Section~\ref{sect:results}, we will 
describe our main results concerning the asymptotic behavior of $Y$ in continuous time 
 as well as on discrete subsequences. The case of additive Lévy noise will be investigated in Theorems~\ref{thm:cont}, \ref{thm:WLLN}, and \ref{thm:Yf-ds}, respectively. 
Special cases will be discussed in Corollaries~\ref{cor:1} and \ref{cor:2} and 
Examples~\ref{ex:1} and \ref{ex:2} in order to illustrate the subtle necessary and 
sufficient conditions found in these theorems. In 
Theorems~\ref{thm:cs-g}, \ref{thm:WLLN-g}, and \ref{thm:ds-g}, we then extend the results to the stochastic heat 
equation with multiplicative noise when the nonlinear function $\si$ is bounded away from zero 
and infinity. The proofs will be given in Section~\ref{sect:proof}, where we analyze the ``bad'' jumps (that could 
destroy the SLLN) and the 
``nice'' jumps (that behave according to the SLLN) separately in Sections~\ref{sect:rc} and \ref{sect:of}, before proving 
the main results in Section~\ref{sect:mr}.

Throughout this paper, we use $C$ to denote a strictly positive finite constant whose 
exact value is not important and may change from line to line.

\section{Results}\label{sect:results}

As the Gaussian case is studied separately in Theorem~\ref{thm:B},
we assume from now on that $\dot \La$ is a Lévy space-time white noise without Gaussian 
part. More specifically, we suppose that the random measure associated to $\dot \La$ is 
given by
\begin{equation} \label{eq:def-Lambda}
\begin{split}
\Lambda(\dd t, \dd x) & = m\, \dd t\, \dd x + 
\int_{\R} z\, (\mu - \nu)(\dd t, \dd x, \dd z) = m_0 \,\dd t\, \dd x
+ \int_{\R} z  \,\mu(\dd t, \dd x, \dd z),
\end{split}
\end{equation}
where 
$m\in\R$ is the mean of the noise $\dot \La$,
$\mu$ is a Poisson random measure on 
$(0, \infty) \times \R^d \times \R$ whose intensity measure $\nu$ takes the form
$\nu ( \dd t, \dd x, \dd z) = \dd t\, \dd x\, \lambda(\dd z)$,
with a L\'evy measure $\lambda$ satisfying
\begin{equation} \label{eq:int-lambda}
\int_\R |z| \,\lambda (\dd z) < \infty,
\end{equation}
and $m_0=m-\int_{\R} z \,\la(\dd z)$ is the \emph{drift} of $\dot \La$. In particular, for bounded Borel sets $A,A_i\subseteq[0,\infty)\times\R^d$ such that $(A_i)_{i\in\N}$ are pairwise disjoint, the random variables $(\La(A_i))_{i\in\N}$ are independent, and we have the Lévy--Khintchine formula
\[ \E[e^{iu\La(A)}] = \exp\lp\lp im_0 u + \int_\R (e^{iuz}-1)\,\la(\dd z) \rp\Leb(A)\rp,\qquad u\in\R, \]
where $\Leb$ denotes the Lebesgue measure on $[0,\infty)\times\R^d$.
In what follows, we 
always assume that $\lambda$ is not identically zero.

\brem 
Condition \eqref{eq:int-lambda} makes sure that the jumps of the Lévy noise are locally 
summable. In fact, if $\int_{[-1,1]} |z|^p\,\la(\dd z) = \infty$ for some $p>1$, then the 
sample path $t\mapsto Y(t,x)$, for fixed $x\in\R^d$, is typically unbounded on any 
nonempty open subset of $[0,\infty)$; see \cite[Theorem~3.7]{Chong18b}. In this case, if 
the noise has jumps of both signs, we trivially have
\[
\limsup_{t \to \infty} \frac{Y_0(t)}{f(t)} = -\liminf_{t \to \infty} 
\frac{Y_0(t)}{f(t)} =\infty
\]
for any nonnegative nondecreasing function $f$, due to the local irregularity of the 
solution. As the focus of this paper is on the global irregularity of the solution, we 
will assume \eqref{eq:int-lambda} in all what follows. In particular, by 
\cite[Theorem~3.5]{Chong18b}, if there exists $0<p<1$ such that
$\int_{[-1,1]} |z|^p\,\la(\dd z) < \infty$, then $t\mapsto Y(t,x)$, for fixed $x\in\R^d$, 
has a continuous modification.
\erem

\subsection{Additive noise}
We first consider the case of additive Lévy noise.
It is immediate to see that under the assumption \eqref{eq:int-lambda}, the mild solution 
to \eqref{SHE} given by \eqref{eq:mild}
is well defined. As in the introduction, we shall fix a spatial point, say 
$x=(0,\dots,0)$, and investigate the behavior of $Y_0(t) = Y(t,0)$ as $t\to\infty$. 
The 
following result extends Theorem~\ref{thm:A} to general Lévy noise, assuming a 
slightly stronger condition than \eqref{eq:int-lambda}:
\beq\label{eq:cond-lambda}
\exists\, \eps>0\colon m_\la(1+\eps)  = \int_\R
|z|^{1+\eps}\,\la(\dd z)<\infty\qquad\text{and}\qquad \int_{-1}^1 |z \log z|\,\la(\dd z) <\infty.
\eeq
\begin{theorem} \label{thm:cont}
Let 
$f\colon (0,\infty) \to (0,\infty)$ be a nondecreasing function, $\si\equiv1$, $Y_0$ be given by \eqref{eq:Y0}, and $\la$ satisfy 
\eqref{eq:cond-lambda}.
\benu
\item If $\int_1^\infty 1/f(t) \,\dd t = \infty$ and $\la((0,\infty))>0$ (resp., 
$\la((-\infty,0))>0$), then
\[
\limsup_{t \to \infty} \frac{Y_0(t)}{f(t)} = \infty  \qquad \left(\text{resp.,}\quad 
\liminf_{t \to \infty} \frac{Y_0(t)}{f(t)} = -\infty\right)\quad\as
\]
\item Conversely, if 
$\int_1^\infty 1/f(t) \,\dd t < \infty$,
 then
\[
\lim_{t \to \infty} \frac{Y_0(t)}{f(t)} = 0 \quad \text{a.s.} 
\]
\item If $\la((0,\infty))=0$ (resp., $\la((-\infty,0))=0$), then
\beq\label{eq:liminf}
\limsup_{t \to \infty} \frac{Y_0(t)}{t} = m  \qquad \left(\text{resp.,}\quad \liminf_{t \to 
\infty} \frac{Y_0(t)}{t} = m\right)\quad\as
\eeq
\eenu
\end{theorem}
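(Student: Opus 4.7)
The plan is to decompose $Y_0(t)$ by splitting the underlying Poisson random measure $\mu$ into jumps that lie ``near'' the singularity of $g$ at $(t,0)$---which are responsible for the irregular peaks---and jumps that lie ``far'' from it, which behave according to a standard SLLN. Using the second representation of $\Lambda$ in \eqref{eq:def-Lambda} together with $\int_0^t\!\int_{\R^d} g(t-s,-y)\,\dd y\,\dd s = t$, I would write $Y_0(t) = m_0 t + J(t)$, where $J(t) = \int_0^t\!\int_{\R^d}\!\int_\R g(t-s,-y)\,z\,\mu(\dd s,\dd y,\dd z)$, and then for each $t$ partition the integration domain into $A_t \cup A_t^c$, with $A_t$ capturing the potentially explosive configurations (roughly, $(s,y)$ close to $(t,0)$ and $|z|$ not too small). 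This yields $J(t) = J_A(t) + J_{A^c}(t)$.

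The next step is an SLLN for the far part: $(m_0 t + J_{A^c}(t))/t \to m$ almost surely. Here the integrand is bounded on $A_t^c$, its second moment after compensation grows at most linearly in $t$ under \eqref{eq:cond-lambda}, and one can combine a Chebyshev--Borel--Cantelli argument on an integer grid with a modulus-of-continuity estimate based on the smoothness of $g$ off the singular set to pass from discrete to continuous time.

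For the near part $J_A(t)$, I would exploit the fact that a single positive jump at $(\tau,\eta,z)$ produces a sample-path peak of height of order $z|\eta|^{-d}$ at time $\tau + |\eta|^2/(2d)$, and that the induced point process of peak heights versus peak times is itself Poisson with an explicitly computable intensity. For (1) with $\la((0,\infty))>0$ and $\int_1^\infty 1/f(t)\,\dd t = \infty$, the expected number of peaks in each strip $[n,n+1]$ exceeding $f(n)$ is of order $1/f(n)$; these events are independent across strips, so the second Borel--Cantelli lemma produces infinitely many such peaks and $\limsup_{t\to\infty} Y_0(t)/f(t) = \infty$. For (2) with $\int_1^\infty 1/f(t)\,\dd t < \infty$, the first Borel--Cantelli lemma shows that eventually no peak on any strip exceeds $\eps f(t)$; combined with the far-SLLN, this forces $Y_0(t)/f(t) \to 0$. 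Part (3) then follows by symmetry: if, say, $\la((0,\infty)) = 0$, all peaks are nonpositive, so $J_A(t) \leq 0$ and $\limsup Y_0(t)/t \leq m$, while the matching lower bound comes from choosing $t_n\to\infty$ in the gaps between isolated negative peaks, on which $Y_0(t_n)/t_n \to m$ by the far-SLLN.

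The main obstacle is the temporal dependence: a single jump contributes to a whole time interval, so Borel--Cantelli cannot be applied directly to the continuum $(Y_0(t))_{t\geq 0}$. This is resolved by reducing the analysis to the discrete point process of peaks generated by individual jumps, whose independence structure is inherited from the underlying Poisson measure, and then transferring back to continuous time using that the peak location of each jump is determined by the jump itself. The $|z\log z|$ integrability in \eqref{eq:cond-lambda} enters exactly where the variance bounds on the far part must be summable at the logarithmic rate demanded by the integral test on $f$.
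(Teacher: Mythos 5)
Your overall strategy matches the paper's: decompose $Y_0$ into a contribution from jumps near the heat-kernel singularity at $(t,0)$ and a contribution from the remaining jumps (the paper calls these $Y_1$ and $Y_2$, defined in \eqref{eq:Y1Y2}), prove an SLLN for the far part (Proposition~\ref{prop:small-box}), handle the near part via the peak structure $z|\eta|^{-d}$ from Lemma~\ref{lem:hk}, and use the second Borel--Cantelli lemma on independent time strips for part (1). That portion of the argument is sound.

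There is, however, a genuine gap in your argument for part (2). You claim that the first Borel--Cantelli lemma shows ``eventually no peak on any strip exceeds $\eps f(t)$,'' and that this, together with the far-SLLN, forces $Y_0(t)/f(t)\to0$. But the near part is a \emph{sum} of contributions from all recent close jumps, not a maximum. Controlling the single largest peak does not control the sum: there could be many jumps, each producing a moderate peak, whose contributions accumulate to something comparable with $f(t)$. The paper's proof of Proposition~\ref{prop:Y1-cont} (2) precisely addresses this: it stratifies the recent close jumps according to the magnitude of $z/|y|^d$ (events $B_n, C_n, D_n, E_{n,0}, E_{n,j}$, see Figure~\ref{table2}), uses the first Borel--Cantelli lemma to bound the \emph{number} of jumps in each stratum, and then sums the maximal contributions stratum by stratum. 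The decisive technical step is that $\sum_j r_{j,n}\Delta_{j,n}$ stays bounded (equation \eqref{eq:sum-r-Delta}), which reduces via $\int_{\psi(1)}^\infty\psi^{-1}(x)\,\dd x = v_d\int_0^1 z|\log z|\,\la(\dd z)$ to the $|z\log z|$ integrability assumed in \eqref{eq:cond-lambda}. This contradicts your closing remark: the $|z\log z|$ condition is not needed for the variance estimates on the far part---it is exactly what tames the accumulated small near-jumps. (The $(1+\eps)$-moment condition is what controls the far part, via the Burkholder--Davis--Gundy-type bounds in Lemma~\ref{lem:asYsubseq} and Lemma~\ref{lem:n}.) Without an argument of this kind your proof of (2), and hence also your lower bound in (3), does not close.

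A secondary remark: your near set $A_t$ is described as ``$(s,y)$ close to $(t,0)$ and $|z|$ not too small.'' If small jumps near the singularity are pushed into $A_t^c$, the far-part integrand $g(t-s,-y)z$ is no longer bounded there (the heat kernel still blows up), so the second-moment and modulus-of-continuity argument you sketch for the far-SLLN would not apply directly. The paper avoids this by splitting only in $(s,y)$, not in $z$, which keeps $Y_2$'s integrand genuinely bounded on its domain.
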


Hence, the SLLN fails for any non-Gaussian Lévy noise. 
Let us remark, however, that the weak law of large numbers does hold true. In particular, there is no (additively) intermittent behavior of the moments of the solution! In the following result, we only consider moments of order less than $1+2/d$ because all higher moments are infinite by \cite[Theorem~3.1]{Chong18}.
\bthm\label{thm:WLLN}
Suppose that $\si\equiv1$, that $Y_0$ is given by \eqref{eq:Y0}, and that $\la$ satisfies \eqref{eq:cond-lambda} with some $\eps>0$. Then for every $p\in (0,1+\eps]\cap(0,1+2/d)$, we have 
\beq\label{eq:WLLN} \frac{Y_0(t)}{t} \stackrel{L^p}{\longrightarrow} m\qquad \text{as}\qquad t\to\infty. \eeq
\ethm

Next, we continue with our 
discussion on subsequences. 
The following theorem extends Theorem~\ref{thm:C} to general Lévy noises as well as 
general sequences and weight functions. Recall the notation $\Delta t_n = t_n - t_{n-1}$ 
(with $t_0=0$) for the increments of $t_n$.
\bthm\label{thm:Yf-ds}
Let $f\colon(0,\infty)\to(0,\infty)$ be a nondecreasing function, $\si\equiv1$,  $Y_0$ be given by \eqref{eq:Y0}, and $t_n$ be a 
nondecreasing sequence tending to infinity. Assume that $\la$ satisfies 
\eqref{eq:cond-lambda}.
\benu
\item If
\begin{align} \label{eq:t-cond-0}
&\sum_{n=1}^\infty \int_0^\infty \lp \lp\frac{z}{f(t_n)}\rp^{2/d}\wedge \Delta t_n 
\rp\frac{z}{f(t_n)}\,\la(\dd z) = \infty\\
\label{eq:t-cond-1} \Bigg(\text{resp.,}\quad &\sum_{n=1}^\infty \int_{-\infty}^0 \lp \lp\frac{|z|}{f(t_n)}\rp^{2/d}\wedge \Delta t_n \rp\frac{|z|}{f(t_n)}\,\la(\dd z) = \infty \Bigg)
\end{align}
and $\liminf_{n \to \infty} \frac{f(t_n)}{t_n} > 0$, then
\beq\label{eq:aux2}
\limsup_{n \to \infty} \frac{Y_0(t_n)}{f(t_n)} = \infty \qquad \left(\text{resp.,}\quad \liminf_{n \to \infty} \frac{Y_0(t_n)}{f(t_n)} = -\infty \right) \quad \as
\eeq
\item Conversely, suppose that the series in \eqref{eq:t-cond-0} (resp., 
\eqref{eq:t-cond-1}) is finite. 
If $f$ is unbounded and
\beq\label{eq:kappa} \lim_{n \to \infty} 
\frac{t_n}{f(t_n)} = \kappa \in [0,\infty], \eeq
where $\kappa<\infty$ when $m=0$, then, with probability $1$,
\beq\label{eq:aux}
\limsup_{n \to \infty} \frac{Y_0(t_n)}{f(t_n)} = \kappa m\qquad \left(\text{resp.,}\quad \liminf_{n \to \infty} \frac{Y_0(t_n)}{f(t_n)} = \kappa m\right).
\eeq
\eenu
\ethm

\brem On the one  hand, as we shall explain in Remark~\ref{rem:growth}, it is a natural condition to require $\liminf_{n \to \infty} f(t_n)/t_n > 0$ in the first part of Theorem~\ref{thm:Yf-ds}. On the other hand, in order that \eqref{eq:t-cond-0} or \eqref{eq:t-cond-1} hold, the function $f$ must not grow very fast, either. Indeed,
if $\int_1^\infty 1/f(t)\, \dd t < \infty$, then 
$\sum_{n=1}^\infty \Delta t_n / f(t_n) < \infty$ by Riemann-sum approximation, so \eqref{eq:t-cond-0} and \eqref{eq:t-cond-1} cannot be true (in agreement with part (2) of Theorem~\ref{thm:cont}). Typical functions that we have in mind are $f(t)=t$ or $f(t)=t\log^+ t$ (where $\log^+(t)=\log(e+t)$).
\erem

Theorem~\ref{thm:Yf-ds} has a number of surprising consequences. We shall explain them as well as 
 the conditions \eqref{eq:t-cond-0} and \eqref{eq:t-cond-1} through a series of corollaries and examples.

If we bound the minimum in \eqref{eq:t-cond-0} (resp., \eqref{eq:t-cond-1}) by the first 
term, we immediately obtain the following result:
\bcor\label{cor:1} If
\beq \label{eq:f}\begin{split}
&\int_0^\infty |z|^{1+2/d}\,\la(\dd z)<\infty \qquad\lp 
\text{resp.,}\quad\int_{-\infty}^0 |z|^{1+2/d}\,\la(\dd z)<\infty\rp\\
 \text{and}\qquad 
&\sum_{n=1}^\infty \frac{1}{f(t_n)^{1+2/d}} < \infty,\end{split}
\eeq
then the series in \eqref{eq:t-cond-0} (resp., \eqref{eq:t-cond-1}) is finite.
\ecor

Upon taking $f(x)=x$ and $t_n=n^p$, we immediately obtain the first statement of Theorem~\ref{thm:C}.
Observe that \eqref{eq:f} separates the complicated expressions in \eqref{eq:t-cond-0} and \eqref{eq:t-cond-1} into a simple \emph{size} condition on the jumps and a simple \emph{growth} condition on the sequence $t_n$. But in general, for the SLLN to hold, we neither need $(1+2/d)$-moments, nor does the sequence have to grow fast.

We write $a_n \sim b_n$ if $\lim_{n \to \infty} a_n / b_n = 1$ and 
$a_n \approx b_n$ if there are constants $C_1,C_2\in(0,\infty)$ such 
that $a_n / b_n  \in [C_1, C_2]$ for large values of $n$. The same notation is also used 
for continuous variables.

\bex[Condition \eqref{eq:f} is not necessary]\label{ex:1}
Neither the condition on the jumps nor the condition on $t_n$ in \eqref{eq:f} is necessary.
\benu
\item Consider the sequence $t_n=n^p$ with $p>0$ and $f(t)=t$. Then 
$\Delta t_n\sim p n^{p-1}$. Furthermore,
\begin{align*}
\int_0^\infty \lp \lp\frac{z}{n^p}\rp^{2/d}\wedge n^{p-1} \rp\frac{z}{n^p}\,\la(\dd z) 
=
\int_0^{n^{p+(p-1)d/2}} \frac{z^{1+2/d}}{n^{p(1+2/d)}}\,\la(\dd z) + 
\int_{n^{p+(p-1)d/2}}^\infty \frac{z}{n}\,\la(\dd z),
\end{align*}
where we use the convention $\int_a^b = \int_{(a,b]}$. 
Summing over $n\in\N$ and changing integral and summation, we obtain
\beq\label{eq:exp-2}
\int_0^\infty \left(
\sum_{n^{p + (p-1)d /2} \geq z} \frac{z^{1+2/d}}{n^{p(1+2/d)}}+ 
\sum_{n^{p + (p-1)d /2} < z} \frac{z}{n}\right)\,\la(\dd z). 
\eeq
The second sum in the integral above is finite only if 
$p + (p-1) d / 2 > 0$, or equivalently $p > d/(d+2)$.
Then the first sum in the integral is  
$\approx z^{1+2/d}z^{(1-p(1+2/d))/(p+(p-1)d/2)}=z$, while the second sum is 
$\approx z\log^+(z)\bone_{z>1}$. Hence, the expression in 
\eqref{eq:exp-2} is finite if and only if $\int_0^\infty z\log^+(z)\,\la(\dd z)<\infty$, 
which is always true by \eqref{eq:cond-lambda}. The same argument obviously applies to the 
negative jumps as well. Thus, under \eqref{eq:cond-lambda}, and if $\la((0,\infty))>0$ 
(resp., $\la((-\infty,0))>0$), the series in \eqref{eq:t-cond-0} (resp.,
\eqref{eq:t-cond-1}) is finite for the sequence $t_n=n^p$ if and only if $p>d/(d+2)$. This 
shows that the first part in \eqref{eq:f} is not a necessary condition.

\item
An easy counterexample also shows that the second condition in \eqref{eq:f} is not 
necessary. Consider the sequence $(t_n)_{n\in\bbn}$ that visits each $n\in\N$ exactly $n$ 
times, that is,
\beq\label{eq:seq-ex} t_1 = 1,\quad t_2=t_3=2,\quad t_4=t_5=t_6=3,\quad 
t_7=t_8=t_9=t_{10}=4\quad \text{etc.} \eeq
Then, from the SLLN on the sequence $n$, we derive
\[ \lim_{n\to\infty} \frac{Y_0(t_n)}{t_n} = \lim_{n\to\infty} \frac{Y_0(n)}{n} = m. \]
But we have
\[ \sum_{n=1}^\infty \frac{1}{t_n^{1+2/d}} = \sum_{n=1}^\infty \frac{n}{n^{1+2/d}} = 
\sum_{n=1}^\infty \frac{1}{n^{2/d}}, \]
which is infinite for $d\geq2$.
\eenu
\eex

In \eqref{eq:seq-ex}, we have seen a sequence $t_n$ on which the SLLN holds although it 
grows relatively slowly. Indeed, we have $t_n=O(\sqrt{n})$ and the SLLN fails on the 
sequence $\sqrt{n}$ if $d\geq2$; see the first part in Example~\ref{ex:1}. So the growth 
of a sequence $t_n$ does not fully determine whether the SLLN holds or not. In fact, we 
have found an example of two sequences $(s_n)_{n\in\N}$ and $(t_n)_{n\in\N}$ where we have 
$s_n\geq t_n$ for all $n\in\N$, but the SLLN only holds on $(t_n)_{n\in\N}$ and not on 
$(s_n)_{n\in\N}$. Hence, in order to determine whether the SLLN holds or not on a given 
sequence, we have to take into account its \emph{clustering} behavior, in addition to its 
speed. This is why the increments $\Delta t_n$ enter the conditions \eqref{eq:t-cond-0} 
and \eqref{eq:t-cond-1}. The following criterion is an improvement of 
Corollary~\ref{cor:1}.

\bcor\label{cor:2}
Suppose that
\beq\label{eq:f-2} \int_0^\infty |z|^{1+2/d}\,\la(\dd z)<\infty \qquad\lp \text{resp.,}\quad\int_{-\infty}^0 |z|^{1+2/d}\,\la(\dd z)<\infty\rp\eeq
and that $\la((0,\infty))\neq0$ (resp., $\la((-\infty,0))\neq0$). Then the series in \eqref{eq:t-cond-0} (resp., \eqref{eq:t-cond-1}) converges if and only if 
\beq\label{eq:f-3} \sum_{n=1}^\infty \frac{f(t_n)^{-2/d}\wedge \Delta t_n}{f(t_n)} < \infty.\eeq 

In particular, if $m_\la(1+2/d)<\infty$ and $\la\neq 0$, the SLLN holds on $t_n$ if and only if $t_n$ satisfies \eqref{eq:f-3} with $f(t)=t$.
\ecor
\bpr
We write the left-hand side of \eqref{eq:t-cond-0} as
\begin{align*}
&\sum_{n=1}^\infty \lp \int_0^{f(t_n)(\Delta t_n)^{d/2}} \frac{z^{1+2/d}}{f(t_n)^{1+2/d}}\,\la(\dd z) + \int_{f(t_n)(\Delta t_n)^{d/2}}^\infty \frac{z\Delta t_n}{f(t_n)}\,\la(\dd z) \rp,
\end{align*}
and split this sum into two parts, $I_1$ and $I_2$, according to whether $n$ belongs to
\[ \cali_1 = \{n\in\N: \Delta t_n \leq f(t_n)^{-2/d}\}\qquad \text{or}\qquad \cali_2 = \{n\in\N: \Delta t_n > f(t_n)^{-2/d}\}. \]
Then by \eqref{eq:cond-lambda} and \eqref{eq:f-3}, 
\begin{align*}
I_1 \leq \sum_{n\in \cali_1} \lp \frac{(f(t_n)(\Delta t_n)^{d/2})^{2/d}}{f(t_n)^{1+2/d}}\int_0^1 z \,\la(\dd z) + \frac{\Delta t_n}{f(t_n)} \int_0^\infty z\,\la(\dd z) \rp \leq 2\int_0^\infty z\,\la(\dd z) \sum_{n\in\cali_1} \frac{\Delta t_n}{f(t_n)}<\infty,
\end{align*}
and
\begin{align*}
I_2 &\leq \sum_{n\in \cali_2} \lp \frac{1}{f(t_n)^{1+2/d}}\int_0^\infty z^{1+2/d} \,\la(\dd z) +  \int_0^\infty z\frac{(z/f(t_n))^{2/d}}{f(t_n)}\,\la(\dd z) \rp\\
&\leq 2\int_0^\infty z^{1+2/d}\,\la(\dd z) \sum_{n\in\cali_2} \frac{1}{f(t_n)^{1+2/d}}<\infty,
\end{align*}
which shows one direction in \eqref{eq:f-3}. The other direction follows from
\begin{align*} I_1&\geq \sum_{n\in\cali_1} \lp \int_0^{f(t_n)(\Delta t_n)^{d/2}} \frac{z^{1+2/d}\Delta t_n}{f(t_n)}\,\la(\dd z) + \int_{f(t_n)(\Delta t_n)^{d/2}}^\infty \frac{z\Delta t_n}{f(t_n)}\,\la(\dd z) \rp\\
 &\geq \sum_{n\in\cali_1} \frac{\Delta t_n}{f(t_n)} \int_0^\infty (z\wedge z^{1+2/d})\,\la(\dd z)
\end{align*}
and
\begin{align*}
I_2&\geq \sum_{n\in\cali_2} \lp \int_0^{f(t_n)(\Delta t_n)^{d/2}} \frac{z^{1+2/d}}{f(t_n)^{1+2/d}}\,\la(\dd z) + \int_{f(t_n)(\Delta t_n)^{d/2}}^\infty \frac{z}{f(t_n)^{1+2/d}}\,\la(\dd z) \rp\\
&\geq \sum_{n\in\cali_2} \frac{1}{f(t_n)^{1+2/d}} \int_0^\infty (z\wedge z^{1+2/d})\,\la(\dd z).
\end{align*}
An analogous argument applies to \eqref{eq:t-cond-1}.
\epr

Is it possible to separate \eqref{eq:t-cond-0} and \eqref{eq:t-cond-1} into a condition on the Lévy measure $\la$ and a condition on the sequence $t_n$? And is it possible to determine whether the SLLN holds or not  by only looking at the sequence $t_n$, \emph{without} assuming a finite $(1+2/d)$-moment as in Corollary~\ref{cor:2}, but only assuming a finite first moment (or a finite $(1+\eps)$-moment as in \eqref{eq:cond-lambda})? The answer is \emph{no}, in both cases.

\bex[Conditions \eqref{eq:t-cond-0} and \eqref{eq:t-cond-1} are not separable]\label{ex:2}
Consider $f(t)=t$ and the sequence $t_n = (n(\log^+ n)^{1+\theta})^{d/(d+2)}$ for some $\theta>0$ (in particular, \eqref{eq:f-3} is satisfied). By the mean-value theorem, it is not difficult to see that 
\beq\label{eq:incr} \Delta t_n \approx n^{-2/(d+2)}(\log^+ n)^{(1+\theta)d/(d+2)},\quad \frac{\Delta t_n}{t_n} \approx n^{-1},\quad t_n(\Delta t_n)^{d/2}\approx (\log^+ n)^{(1+\theta)d/2}. \eeq

Next, let us take the L\'evy measure
\[ \la(\dd z) = z^{-1-\al}\bone_{[1,\infty)}(z)\,\dd z \]
for some $\al\in(1,1+2/d)$. In particular, if $\al\in(1,2)$, the Lévy noise will have the same jumps of size larger than $1$ as an $\al$-stable noise. It is easy to verify that the chosen Lévy measure has finite moments up to order $\al$ (but not including $\al$) so that \eqref{eq:cond-lambda} is satisfied, but \eqref{eq:f-2} is not. 

In this set-up, the series in \eqref{eq:t-cond-0} becomes
\beq\label{eq:series} \sum_{n=1}^\infty  \int_1^{t_n(\Delta t_n)^{d/2}} \lp \frac{z}{t_n}\rp^{1+2/d} z^{-1-\al} \,\dd z + \sum_{n=1}^\infty  \int_{t_n(\Delta t_n)^{d/2}}^\infty \frac{z\Delta t_n}{t_n} z^{-1-\al} \,\dd z. \eeq
Regarding the first sum, \eqref{eq:incr} implies that
\begin{align*} \int_1^{t_n(\Delta t_n)^{d/2}} \lp \frac{z}{t_n}\rp^{1+2/d} z^{-1-\al} \,\dd z&\approx \frac{1}{n(\log^+n)^{1+\theta}}\int_1^{(\log^+ n)^{(1+\theta)d/2}} z^{2/d-\al}\,\dd z \\
&\approx \frac{(\log^+ n)^{(1+2/d-\al)(1+\theta)d/2}}{n(\log^+n)^{1+\theta}},\end{align*}
which in turn shows that the first sum in \eqref{eq:series} converges if and only if 
\beq\label{eq:cond-log} \al>1+\frac{2}{d(1+\theta)}.  \eeq
The same holds for the second sum in \eqref{eq:series} because by \eqref{eq:incr},
\begin{align*}
\int_{t_n(\Delta t_n)^{d/2}}^\infty \frac{z\Delta t_n}{t_n} z^{-1-\al} \,\dd z \approx n^{-1} \int_{(\log^+ n)^{(1+\theta)d/2}}^\infty z^{-\al}\,\dd z \approx n^{-1}(\log^+ n)^{(1-\al)(1+\theta)d/2}.
\end{align*}
Altogether, the series in \eqref{eq:t-cond-0} converges if and only if \eqref{eq:cond-log} holds, which involves $\al$ (a parameter of the noise) and $\theta$ (a parameter of $t_n$) at the same time.
\eex

Corollary~\ref{cor:2} and Example~\ref{ex:2} also show the following peculiar fact. If the jumps of $\La$ have a finite $(1+2/d)$-moment, then whether we have the SLLN on $(t_n)_{n\in\N}$ or not, only depends on this sequence itself; the details of the Lévy measure (i.e., the distribution of the jumps) do not matter. So  if $\La$ has both positive and negative jumps, we either have the SLLN on $t_n$, or we see peaks in both directions, in the sense that the limit superior/inferior of $Y_0(t_n)/t_n$ is $\pm \infty$. 

But for noises with an infinite $(1+2/d)$-moment, and again jumps of both signs, we may have a sequence $t_n$ on which the SLLN (with $f(t)=t$) only fails in one direction. That is, we see peaks, for instance, for the limit superior, but then have convergence to the mean for the limit inferior. By Theorem~\ref{thm:cont}, this does not happen in continuous time: if we have both positive and negative jumps, we see both positive and negative peaks.

\subsection{Multiplicative noise with bounded nonlinearity}
Without much additional effort, we can generalize
Theorems~\ref{thm:cont}, \ref{thm:WLLN}, and \ref{thm:Yf-ds} to the stochastic heat equation with a bounded multiplicative nonlinearity. Consider \eqref{SHE}
where $\si\colon\R\to\R$ is a globally Lipschitz function that is \emph{bounded} and 
\emph{bounded away from $0$}, that is, there are constants $k_1,k_2>0$ such that 
\beq\label{eq:si} k_1<\si(x)<k_2,\qquad x\in\R.\eeq
It is well known from \cite[Théorème~1.2.1]{SLB98} that if $\la$ satisfies 
\eqref{eq:int-lambda} (in particular, if \eqref{eq:cond-lambda} holds), and $\dot\La$ has 
no Gaussian part, then \eqref{SHE} has a unique mild solution $Y$, that is, there is a 
unique predictable process $Y$ that satisfies the integral equation
\[
Y(t,x) = \int_0^t \int_{\R^d} g(t-s, x- y) \sigma(Y(s,y)) \, \Lambda(\dd s, \dd y). 
\]
We continue to 
write $Y_0(t)=Y(t,0)$.

\begin{theorem}\label{thm:cs-g} Suppose that $f\colon (0,\infty) \to (0,\infty)$ is nondecreasing and that $\si\colon\R\to\R$ is Lipschitz continuous and satisfies \eqref{eq:si}. Furthermore, assume that \eqref{eq:cond-lambda} holds.
\benu
\item Part (1) and part (2) of Theorem~\ref{thm:cont} remain valid.
\item Part (3) of Theorem~\ref{thm:cont} remains valid if we replace \eqref{eq:liminf} by
\[
\limsup_{t \to \infty} \frac{Y_0(t)}{t} <\infty  \qquad \left(\text{resp.}\quad 
\liminf_{t \to \infty} \frac{Y_0(t)}{t} >-\infty\right)\quad\as
\]
\eenu
\end{theorem}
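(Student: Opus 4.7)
The plan is to mimic the proofs of parts (1)--(3) of Theorem~\ref{thm:cont} after writing the mild solution in the decomposed form
\[
Y_0(t) = m_0 A(t) + J(t),\qquad A(t) := \int_0^t\!\int_{\R^d} g(t-s,-y)\,\si(Y(s,y))\,\dd y\,\dd s,
\]
with $J(t):=\sum_{i:\tau_i<t} \si(Y(\tau_i-, \eta_i))\, z_i\, g(t-\tau_i, -\eta_i)$. Since $\int_{\R^d} g(t-s,-y)\,\dd y = 1$ for $s<t$ and $k_1\leq \si\leq k_2$, we get the deterministic sandwich $k_1 t \leq A(t) \leq k_2 t$, and every summand in $J(t)$ has its additive analogue $z_i g(t-\tau_i,-\eta_i)$ multiplied by a predictable factor in $[k_1,k_2]$. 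This localization of the nonlinearity into constant prefactors is what makes the additive arguments transportable.

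For the divergence in part (1) of Theorem~\ref{thm:cont}, the additive argument of Section~\ref{sect:rc} constructs, under the integral test $\int_1^\infty 1/f\,\dd t = \infty$ with $\la((0,\infty))>0$, a subsequence of positive jumps whose contributions $z_i g(t-\tau_i,-\eta_i)$ eventually exceed $Cf(t)$ at certain deterministic evaluation times, and concludes via Borel--Cantelli. The multiplicative modulation scales each such peak by at least $k_1$, leaving the Borel--Cantelli input and the integral test unchanged, so the same conclusion holds; the negative-jump case is symmetric. The convergence in part (2) of Theorem~\ref{thm:cont} is driven by an $L^{1+\eps}$ moment bound on the compensated jump integral combined with a Borel--Cantelli/Chernoff estimate from Section~\ref{sect:of}; the bound $\si\leq k_2$ makes the multiplicative $L^{1+\eps}$ norm at most $k_2^{1+\eps}$ times the additive one, so the same argument gives $Y_0(t)/f(t)\to 0$ almost surely.

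For part (2) of Theorem~\ref{thm:cs-g}, suppose $\la((0,\infty))=0$, so every realized $z_i$ is negative; combined with $\si>0$, every summand in $J(t)$ is nonpositive, whence
\[
Y_0(t) \leq m_0 A(t) \leq \max(m_0,0)\,k_2\,t,
\]
which yields $\limsup_{t\to\infty} Y_0(t)/t \leq \max(m_0,0)\,k_2 <\infty$ almost surely; the case $\la((-\infty,0))=0$ is symmetric. The main obstacle I anticipate is tracking constants in the Burkholder--Davis--Gundy estimate for the martingale piece of $J(t)$, since the integrand $\si(Y(s-,y))$ is now genuinely random rather than the constant $1$; however, because it is predictable and uniformly bounded, the standard stochastic integration inequalities apply with only an extra factor of $k_2^{1+\eps}$ compared to the additive case, so no qualitatively new technical ingredient is required.
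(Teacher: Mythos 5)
Your decomposition $Y_0(t)=m_0A(t)+J(t)$ with the sandwich $k_1 t\leq A(t)\leq k_2 t$ and each jump term in $J(t)$ scaled by a predictable factor in $[k_1,k_2]$ is exactly the idea behind the paper's proof of Theorem~\ref{thm:cs-g}, which records it as the two pathwise inequalities $Y_0(t)\geq k_1 Y_0^+(t)+k_2 Y_0^-(t)+Cm_0t$ and $|Y_0(t)|\leq |m_0|k_2t+k_2\bigl(Y_0^+(t)+|Y_0^-(t)|\bigr)$, where $Y_0^\pm$ are the additive positive-/negative-jump sums (no $\si$), and then simply quotes Theorem~\ref{thm:cont} for the one-sided noises, together with the independence-and-subsequence argument explained after \eqref{eq:help} in the $\limsup=\infty$ direction.

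The one detour you should drop is your closing worry about re-tracking constants in Burkholder--Davis--Gundy estimates with the random integrand $\si(Y(s-,y))$. That problem never arises: once the pathwise comparison above is established, the convergence $Y_0(t)/f(t)\to0$ is not re-proved by any stochastic-integral estimate for the multiplicative solution; it is inherited verbatim from the additive Theorem~\ref{thm:cont}(2) applied to the purely additive processes $Y_0^\pm$, combined with $t/f(t)\to0$ from Lemma~\ref{lem:simple} for the drift term. In other words, your first paragraph already contains everything needed, and the multiplicative nonlinearity costs exactly one constant factor $k_2$ in a pathwise bound---no new martingale estimate and ``no qualitatively new technical ingredient,'' as you correctly guessed but did not need to hedge on.
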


\bthm\label{thm:WLLN-g} Suppose that $\la$ and $p$ are as in Theorem~\ref{thm:WLLN} and that $\si$ is Lipschitz continuous and satisfies \eqref{eq:si}. Then, instead of \eqref{eq:WLLN}, we have
\[ \limsup_{t\to\infty} \bbe\lb\lv \frac{Y_0(t)}{t}\rv^p\rb < \infty. \]
\ethm

\begin{theorem}\label{thm:ds-g} Suppose that $f$, $(t_n)_{n\in\N}$, and $\la$ satisfy the same hypotheses as in Theorem~\ref{thm:Yf-ds}. Moreover, let $\si$ be Lipschitz continuous with the property \eqref{eq:si}.
\benu
\item Part (1) of Theorem~\ref{thm:Yf-ds} remains valid.
\item Part (2) of Theorem~\ref{thm:Yf-ds} remains valid if $\kappa<\infty$ in 
\eqref{eq:kappa}, and if \eqref{eq:aux} is replaced by
\[
\limsup_{n \to \infty} \frac{Y_0(t_n)}{f(t_n)}  <\infty\qquad \left(\text{resp.}\quad 
\liminf_{n \to \infty} \frac{Y_0(t_n)}{f(t_n)} >-\infty\right)\qquad\text{a.s.}\]
\eenu
\end{theorem}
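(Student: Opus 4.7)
The plan is to adapt the proofs of Theorem~\ref{thm:Yf-ds} to the multiplicative setting by exploiting the uniform bound $k_1\leq\sigma\leq k_2$. Using the Poisson measure representation of $\La$, the mild solution reads
\[ Y_0(t_n) = m_0\int_0^{t_n}\!\int_{\R^d} g(t_n-s,-y)\sigma(Y(s,y))\,\dd s\,\dd y + \int_0^{t_n}\!\int_{\R^d}\!\int_\R g(t_n-s,-y)\sigma(Y(s-,y))z\,\mu(\dd s,\dd y,\dd z), \]
and the bound $\sigma\leq k_2$ immediately shows that the drift term is bounded in absolute value by $|m_0|k_2 t_n$. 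Under $\liminf_n f(t_n)/t_n>0$ in Part (1) or $\kappa<\infty$ in Part (2), this drift contribution to $Y_0(t_n)/f(t_n)$ stays bounded, so in both parts the analysis reduces to the jump term, call it $J(t_n)$.

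For Part (1), the divergence in the additive case is driven by ``bad'' jumps $(\tau_i,\eta_i,z_i)$ of $\mu$ whose individual contribution $g(t_n-\tau_i,-\eta_i)z_i$ exceeds a constant multiple of $f(t_n)$, and whose existence for infinitely many $n$ follows from \eqref{eq:t-cond-0} (resp.\ \eqref{eq:t-cond-1}) by a Borel--Cantelli argument, as carried out in Section~\ref{sect:rc}. In the multiplicative setting, such a jump contributes $g(t_n-\tau_i,-\eta_i)\sigma(Y(\tau_i-,\eta_i))z_i$, which is still at least $k_1$ times the additive contribution since $\sigma\geq k_1$. Provided the remaining jumps and the drift stay of order $O(f(t_n))$ (which is exactly the content of Part (2) below, applied with $\sigma\leq k_2$), a single bad jump still suffices to push $Y_0(t_n)/f(t_n)$ past any prescribed level, yielding the divergence.

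For Part (2), the goal is $\limsup_n J(t_n)/f(t_n)<\infty$ (and analogously for $\liminf$). Split $\mu$ into large and small jumps at an $n$-dependent threshold, as in Section~\ref{sect:rc}. A Borel--Cantelli argument in the convergent direction ensures that only finitely many large jumps produce a contribution exceeding a constant multiple of $f(t_n)$, and the factor $\sigma\leq k_2$ only multiplies this threshold. For the compensated small-jump part, an $L^p$-moment estimate in the spirit of Theorem~\ref{thm:WLLN-g}, with $p\in(1,1+2/d)\cap(0,1+\eps]$, controls its $p$-th moment by a constant multiple (involving $k_2^p$) of the expressions appearing in \eqref{eq:t-cond-0}--\eqref{eq:t-cond-1}; their convergence, combined with Chebyshev's inequality and Borel--Cantelli along $(t_n)_{n\in\N}$, gives the required almost-sure boundedness. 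The compensator generated by this splitting is bounded in exactly the same manner as the drift term.

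The main obstacle is that in the multiplicative case one cannot identify the exact limit $\kappa m$ appearing in \eqref{eq:aux}, because the drift integral $m_0\int_0^{t_n}\int_{\R^d} g(t_n-s,-y)\sigma(Y(s,y))\,\dd s\,\dd y$ is no longer simply $m_0 t_n$ but a random quantity that is only known to be of order $t_n$. This is also why the case $\kappa=\infty$ must be excluded: without finer structural information on $\sigma$, one cannot separate the unknown drift rate from the normalization $f(t_n)$ when $t_n/f(t_n)\to\infty$. Apart from this obstruction, every ingredient from the proofs of Theorem~\ref{thm:Yf-ds} transfers once each occurrence of $z$ inside a jump contribution is replaced by $k_2 z$ (for upper bounds) or by $k_1 z$ (for lower bounds), depending on the inequality needed.
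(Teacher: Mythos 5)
The core idea here is sound and matches the paper's: use $k_1\leq\sigma\leq k_2$ to reduce the multiplicative jump contributions to additive ones. However, your execution of Part~(1) has a genuine gap. You argue that a single bad jump is enough to push $Y_0(t_n)/f(t_n)$ past any level, ``provided the remaining jumps and the drift stay of order $O(f(t_n))$ (which is exactly the content of Part~(2) below).'' But Part~(2) requires the series in \eqref{eq:t-cond-0}--\eqref{eq:t-cond-1} to \emph{converge}, whereas Part~(1) operates under divergence. If both \eqref{eq:t-cond-0} and \eqref{eq:t-cond-1} diverge, the negative-jump contribution to $Y_0(t_n)/f(t_n)$ is not $O(f(t_n))$ --- its $\liminf$ is $-\infty$ --- so your claim that the remainder stays controlled fails. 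What is actually needed (and what the paper uses) is the inequality $Y_0(t)/f(t)\geq k_1\,Y^+_0(t)/f(t)+k_2\,Y^-_0(t)/f(t)+Cm_0\,t/f(t)$, where $Y^\pm_0$ are the (additive, purely positive/negative jump) processes, combined with the \emph{independence} of $Y^+_0$ and $Y^-_0$: from a subsequence on which $Y^+_0(t_n)/f(t_n)\to\infty$, one extracts a further random subsequence on which $Y^-_0/f\to 0$, exactly as in the argument following \eqref{eq:help}. Your proposal never invokes this independence, and without it the divergence claim in Part~(1) is not established.

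For Part~(2), your plan of re-running the Borel--Cantelli and moment arguments of Section~\ref{sect:rc} with every $z$ replaced by $k_2 z$ would, if carried out carefully, work, since $\sigma$ is a bounded predictable integrand. But it is substantially more work than necessary and somewhat misidentifies where the moment estimate lives (it belongs to the old/far jumps $Y_2$, not to the recent close jumps that are controlled by Poisson counting). The paper's route is the clean one-liner: since each positive-jump term in $Y_0$ is at most $k_2 g(t_n-\tau_i,\eta_i)\zeta_i$ and each negative-jump term is $\leq 0$, one gets $Y_0(t_n)/f(t_n)\leq m_0 C' t_n/f(t_n)+k_2\,Y^+_0(t_n)/f(t_n)$, and then the additive Theorem~\ref{thm:Yf-ds}(2) applied to $Y^+_0$ (with mean $\int_0^\infty z\,\lambda(\dd z)$) finishes the job. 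Your discussion of why $\kappa=\infty$ must be excluded and why the exact limit $\kappa m$ is lost is correct and consistent with the paper.
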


\brem If \eqref{eq:si} is violated, then we no longer expect the last three results to hold. For example, if we consider the parabolic Anderson model with Lévy noise, that is, \eqref{SHE} with $\si(x)=x$ (and a nonzero initial condition), it is believed that the solution develops exponentially large peaks as time tends to infinity (\emph{multiplicative} intermittency). While the moments of the solution indeed grow exponentially fast as shown in \cite{Chong18}, its pathwise asymptotic behavior is yet unknown. Only for a related model, where the noise $\dot\La$  is obtained by averaging a standard Poisson noise over a unit ball in space (so that the resulting noise is white in time but has a smooth covariance in space), it was proved in \cite{Comets05} for delta initial conditions that $t\mapsto \int_{\R^d} Y(t,x)\,\dd x$ has exponential growth in $t$ almost surely.
\erem

\section{Proofs}\label{sect:proof}

In the following, we denote the points of the 
Poisson random measure $\mu$ by $(\tau_i, \eta_i, \zeta_i)_{i \in \N}$ 
and refer to $\tau_i$, $\eta_i$, and $\zeta_i$ as the \emph{jump time}, \emph{jump location}, and \emph{jump 
size}, respectively. 
For the fixed reference point $x=(0,\dots,0)$ and some given $t>0$, we shall say that 
$(\tau_i,\eta_i,\zeta_i)$ with $\tau_i\leq t$ is a 
\bit
\item \emph{recent} (resp., \emph{old}) jump if $t-\tau_i\leq 1$ (resp., $t-\tau_i>1$);
\item \emph{close} (resp., \emph{far}) jump if $|\eta_i|\leq 1$ (resp., $|\eta_i|>1$);
\item \emph{small} (resp., \emph{large}) jump if $|\zeta_i|\leq 1$ (resp., $|\zeta_i|>1$).
\eit

A key to the proofs below is to decompose $Y_0(t)$ into a contribution of the recent close 
jumps and a contribution by all other jumps. That is, by \eqref{eq:mild} and \eqref{eq:def-Lambda}, we have
\[
Y_0(t) = \int_0^t \int_{\R^d} g(t-s, y) \Lambda(\dd s, \dd y) 
= \sum_{\tau_i \leq t} g(t-\tau_i, \eta_i) \zeta_i + m_0 t =: Y_1(t) + Y_2(t),
\]
where
\beq\label{eq:Y1Y2} 
Y_1(t)=\sum_{\tau_i \in (t-1,t],\, |\eta_i| \leq 1 } g(t - \tau_i, \eta_i)\zeta_i,\qquad 
Y_2(t) = m_0 t + \sum_{\tau_i \leq t-1\text{ or } |\eta_i| > 1 } g(t - \tau_i, 
\eta_i)\zeta_i.  
\eeq
We also consider the decomposition $Y_1 =Y^{+}_1+Y^{-}_1$, where $Y^{+}_1$ (resp., 
$Y^{-}_1$) only contains the positive (resp., negative) jumps in the definition of $Y_1$ 
in \eqref{eq:Y1Y2}.

\subsection{Some technical lemmas and notation}

We begin with four simple lemmas: a tail 
and a large deviation estimate for Poisson random variables, two elementary results  
for the heat kernel, and one simple result from analysis.

\begin{lemma} \label{lem:Poisson}
Suppose that $X_\lambda$ follows a Poisson distribution with parameter $\la>0$. 
\benu
\item For every $n\in\N$, we have
\[
\p(X_\la \geq n) \leq \frac{\la^n}{n!}.
\]
\item If $x \geq a \lambda$, where $a>1$, then
\begin{equation} \label{eq:poi-ineq-0}
\p ( X_\lambda \geq x ) \leq e^{-  x(\log a -1+\frac1 a)}. 
\end{equation}
In particular, for $a=2$, we have
\[
\p ( X_\lambda \geq x ) \leq e^{-  \delta_0 x},
\]
where $\delta_0 = \log 2 - 1/2 \approx 0.193$.
\eenu
\end{lemma}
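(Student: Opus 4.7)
For part (1), the plan is to apply Markov's inequality to the $n$-th falling factorial of $X_\la$. Since $X_\la(X_\la-1)\cdots(X_\la-n+1)\geq n!$ on $\{X_\la\geq n\}$, and the factorial moment identity $\E[X_\la(X_\la-1)\cdots(X_\la-n+1)] = \la^n$ for the Poisson distribution (obtained, for instance, by differentiating its probability generating function $s\mapsto e^{\la(s-1)}$ at $s=1$), Markov immediately delivers $\p(X_\la\geq n)\leq\la^n/n!$. Alternatively, one can write the tail as
\[
\p(X_\la\geq n) = e^{-\la}\sum_{j=0}^\infty\frac{\la^{n+j}}{(n+j)!}
\]
and bound $(n+j)!\geq n!\,j!$ (from $\binom{n+j}{j}\geq 1$) to dominate the sum by $(\la^n/n!)\,e^\la$, giving the same conclusion. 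Neither route involves any real obstacle.

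For part (2), the plan is a Chernoff bound with the choice $\theta = \log a$. Using $\E[e^{\theta X_\la}] = \exp(\la(e^\theta-1))$, Markov yields
\[
\p(X_\la\geq x)\leq e^{-\theta x}\E[e^{\theta X_\la}] = \exp\!\bigl(-x\log a + \la(a-1)\bigr).
\]
The assumption $x\geq a\la$ is then invoked to replace $\la$ by $x/a$ in the linear term, producing $\la(a-1)\leq x(1-1/a)$ and hence the exponent $-x(\log a-1+1/a)$, as required. The specialization $a=2$ is immediate; positivity of $\delta_0 = \log 2-1/2$ follows from the fact that $a\mapsto \log a-1+1/a$ vanishes at $a=1$ with derivative $(a-1)/a^2$, hence is strictly increasing on $(1,\infty)$.

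The only subtle point is the order of operations in (2): one should substitute the bound $\la\leq x/a$ \emph{after} fixing $\theta=\log a$ (rather than trying to optimize $\theta$ jointly with the substitution), so that the resulting exponent is linear in $x$ and independent of $\la$. No genuine difficulty arises in either part.
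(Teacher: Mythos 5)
Your proof is correct but uses different tools than the paper in both parts. For part (1), the paper writes $\p(X_\la\geq n)=e^{-\la}\bigl(e^\la-\sum_{k=0}^{n-1}\la^k/k!\bigr)$ and invokes Taylor's theorem with Lagrange remainder to identify the difference as $e^{\theta\la}\la^n/n!$ for some $\theta\in[0,1]$, then bounds $e^{-\la}e^{\theta\la}\leq 1$. You instead apply Markov's inequality to the falling factorial $X_\la(X_\la-1)\cdots(X_\la-n+1)$ using its factorial-moment identity $\E[\,\cdot\,]=\la^n$, or equivalently bound $(n+j)!\geq n!\,j!$ in the tail sum — both are elementary and clean, arguably more self-contained than invoking the form of the Taylor remainder. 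For part (2), both you and the paper run a Chernoff bound, but you fix $\theta=\log a$ up front and then use $\la\leq x/a$ to absorb the $\la$-dependence, whereas the paper picks the optimal exponent $u=\log(x/\la)$ and then uses that $v\mapsto -\log v-1+v$ is decreasing on $(0,1)$ to pass from $\la/x$ to $1/a$. Your observation about the order of operations (substitute the bound on $\la$ after fixing $\theta$) is exactly the correct way to keep the exponent linear in $x$; the paper's route achieves the same thing via a monotonicity argument after optimizing. The two approaches produce identical final bounds, and neither has any gap.
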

\begin{proof}
	By Taylor's theorem, there exists $\theta\in[0,1]$ such that 
	\[
	\p(X_\la\geq n) = e^{-\la}\sum_{k=n}^\infty \frac{\la^k}{k!} = e^{-\la}\lp e^\la 
	-\sum_{k=0}^{n-1} \frac{\la^k}{k!} \rp = e^{-\la}\frac{e^{\theta \la}\la^n}{n!} 
\leq 
	\frac{\la^n}{n!},
	\]
	which proves the first claim. For the second claim, we have
	by Markov's inequality for any $u > 0$,
	\[
	\p(X_\lambda \geq x ) \leq  \p ( e^{u X_\lambda} \geq e^{ux} ) \leq 
	e^{-ux} \E [e^{u X_\lambda}] = e^{- (ux + \lambda - \lambda e^{u})}.
	\]
	Choosing $u= \log (x/\lambda)$, we obtain
	\[
	\p ( X_\lambda \geq x ) \leq e^{- x ( \log \frac{x}{\lambda} - 1 + 
\frac{\lambda}{x})}.
	\]
	Since $\lambda/x \in (0,1/a]$ and the function $u\mapsto \log u^{-1} - 1 + u$ is 
	decreasing on $(0,1)$, the statement follows.
\end{proof}

\begin{lemma}\label{lem:hk} Let $g$ be the heat kernel given in \eqref{eq:def-heat}.
\benu
\item For fixed $x \in \R^d$, 
$g(t,x)$ is increasing on $[0, |x|^2/(2d)]$, decreasing on $[|x|^2/(2d), \infty)$, and 
its maximum is
\begin{equation} \label{eq:gprop}
g(|x|^2/(2d), x) = \left( \frac{d}{2 \pi e} \right)^{d/2}   |x|^{-d} .
\end{equation}
\item The time derivative of $g$ is given by 
\beq\label{eq:gprime} 
\partial_t g(t,x)=\frac{e^{-\frac{|x|^2}{4 t}}
\left( \frac{\pi|x|^2}{t}-2\pi d \right)}
{(4 \pi t)^{d/2+1}},\qquad (t,x)\in (0,\infty)\times\R^d.  
\eeq
There exists a finite constant $C>0$ such that for all $(t,x)\in(0,\infty)\times\R^d$,
\beq \label{eq:gprimebound} 
|\partial_t g(t,x)|\leq 
\begin{cases} C|x|^{-d-2} &\text{if } |x|^2>2dt,\\
Ct^{-d/2-1}  &\text{if } |x|^2\leq 2dt.
\end{cases} 
\eeq
\eenu
\end{lemma}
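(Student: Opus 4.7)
Both parts of the lemma are direct consequences of calculus applied to the explicit formula \eqref{eq:def-heat}. First I would compute $\partial_t g(t,x)$ by the product rule: differentiating $(4\pi t)^{-d/2}$ gives $-\tfrac{d}{2}(4\pi t)^{-d/2-1}\cdot 4\pi$, while differentiating $e^{-|x|^2/(4t)}$ brings down a factor of $|x|^2/(4t^2)$. Collecting the two terms over the common denominator $(4\pi t)^{d/2+1}$ immediately yields \eqref{eq:gprime}. From \eqref{eq:gprime}, the sign of $\partial_t g(t,x)$ is the sign of $\pi|x|^2/t - 2\pi d$, which is positive for $t<|x|^2/(2d)$, zero at $t=|x|^2/(2d)$, and negative for $t>|x|^2/(2d)$. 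This proves the monotonicity statement in (1), and substituting $t=|x|^2/(2d)$ into \eqref{eq:def-heat} and simplifying yields \eqref{eq:gprop}.

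For the bound \eqref{eq:gprimebound}, the idea is to introduce the dimensionless quantity $u = |x|^2/t$ and rewrite
\[
|\partial_t g(t,x)| = \frac{\pi\,|u-2d|\,e^{-u/4}}{(4\pi)^{d/2+1}}\,t^{-d/2-1}.
\]
In the regime $|x|^2\leq 2dt$, i.e.\ $u\in[0,2d]$, the factor $|u-2d|\,e^{-u/4}$ is bounded by the constant $2d$, which immediately gives $|\partial_t g(t,x)|\leq Ct^{-d/2-1}$. In the opposite regime $|x|^2>2dt$, i.e.\ $u>2d$, I would trade powers of $t$ for powers of $|x|$ via $t^{-d/2-1} = |x|^{-d-2}u^{d/2+1}$, which turns the above display into
\[
|\partial_t g(t,x)| \leq C\,|x|^{-d-2}\,(u-2d)\,u^{d/2+1}\,e^{-u/4},
\]
and then observe that the function $u\mapsto(u-2d)u^{d/2+1}e^{-u/4}$ is continuous on $[2d,\infty)$ and tends to $0$ as $u\to\infty$, hence is bounded on this half-line. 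This yields $|\partial_t g(t,x)|\leq C|x|^{-d-2}$.

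There is no real obstacle here: the only care required is the bookkeeping of the two regimes in \eqref{eq:gprimebound} and the observation that in the ``far'' regime $|x|^2>2dt$ one must convert $t^{-d/2-1}$ into $|x|^{-d-2}$ at the cost of a polynomial factor in $u$, which is then controlled by the Gaussian factor $e^{-u/4}$.
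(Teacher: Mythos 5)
Your proof is correct and follows essentially the same route as the paper: compute $\partial_t g$ by the product rule, read off the sign and critical point for part (1), and for the ``far'' bound in part (2) convert $t^{-d/2-1}$ into $|x|^{-d-2}$ at the cost of a polynomial in $|x|^2/t$, which the Gaussian factor absorbs (the paper phrases this as $\sup_{y>0}e^{-y}y^{d/2+2}<\infty$, you phrase it via $u=|x|^2/t$, but it is the same calculation). No gaps.
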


\begin{proof}
Both \eqref{eq:gprop} and \eqref{eq:gprime} follow from standard analysis. The second 
half of \eqref{eq:gprimebound} follows easily from \eqref{eq:gprime}. For the first half, 
using the inequality $\sup_{y > 0} e^{-y} y^{d/2 + 2} < \infty$, we have for
$|x|^2 \geq 2dt$ that
\[
| \partial_t g(t,x) | \leq 
e^{-\frac{|x|^2}{4 t}}  \frac{\pi|x|^2}{t} (4 \pi t)^{-d/2-1}
= C
e^{-\frac{|x|^2}{4 t}} \left( \frac{|x|^2}{4 t} \right)^{d/2 +2} |x|^{-d-2}
\leq C |x|^{-d-2}.
\]
\end{proof}

\begin{lemma} \label{lem:heat-kernel-ineq}
For every $\varepsilon > 0$, there exists
$\delta=\delta(\varepsilon) >0$ such that
\beq\label{eq:ineq2}
g(t+ s, x) \geq (1 - \varepsilon ) g(t,x) 
\eeq
holds under either of the two following conditions:
\benu
\item  $s,t>0$, $x \in \R^d$, and $s/t \leq \delta$;
\item $s,t>0$, $x\in\R^d$, $|x|>1$, and $s<\delta$.
\eenu
\end{lemma}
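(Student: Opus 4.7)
My plan is to derive both inequalities from the explicit ratio
\[
\frac{g(t+s,x)}{g(t,x)} = \left(\frac{t}{t+s}\right)^{d/2}\exp\left(\frac{|x|^2 s}{4t(t+s)}\right),
\]
which follows immediately from \eqref{eq:def-heat}. The exponential factor is always at least $1$, so everything boils down to controlling $(t/(t+s))^{d/2}=(1+s/t)^{-d/2}$ and, when this is not enough, using the exponential factor as compensation.

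For part (1) I would only use the first factor. Since $s/t\leq\delta$ yields $(1+s/t)^{-d/2}\geq(1+\delta)^{-d/2}$, and $(1+\delta)^{-d/2}\to 1$ as $\delta\to 0^+$, I can simply choose $\delta$ small enough that $(1+\delta)^{-d/2}\geq 1-\varepsilon$; the exponential factor is a free bonus.

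Part (2) is the substantive case because $s/t$ can be arbitrarily large when $t$ is tiny. The plan is to split on $u:=s/t$. I first fix $u_0=u_0(\varepsilon)\in(0,1)$ small enough that $(1+u_0)^{-d/2}\geq 1-\varepsilon$, so that the subcase $u\leq u_0$ is handled exactly as in part (1), without using $|x|>1$ or $s<\delta$. In the complementary subcase $u>u_0$, the hypothesis $|x|>1$ promotes the exponent to at least $u^2/(4s(1+u))$, and I would analyze the lower bound
\[
\frac{g(t+s,x)}{g(t,x)}\geq (1+u)^{-d/2}\exp\left(\frac{u^2}{4s(1+u)}\right).
\]
A routine derivative check shows that the logarithm of the right-hand side, viewed as a function of $u>0$, has a unique minimizer $u^*(s)$ with $u^*(s)=O(s)$ as $s\to 0^+$; for $\delta$ small enough this critical point lies below $u_0$ for every $s<\delta$, so the function is increasing on $[u_0,\infty)$ and its infimum over that interval tends to $+\infty$ as $s\to 0^+$. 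Shrinking $\delta$ further so that this infimum exceeds $\ln(1-\varepsilon)$ finishes the proof.

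The main obstacle, although mild, is verifying the monotonicity claim on $[u_0,\infty)$: one must confirm that the unique critical point of $u\mapsto-\frac{d}{2}\ln(1+u)+u^2/(4s(1+u))$ stays below $u_0$ uniformly in $s<\delta$. Solving the critical equation gives $u^*\sim ds$ as $s\to 0^+$, so the bound holds once $\delta$ is chosen small compared to $u_0/d$. Beyond this small bookkeeping step, the argument is essentially a direct estimate from the formula for $g$.
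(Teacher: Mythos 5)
Your proposal is correct and takes essentially the same route as the paper: both start from the explicit ratio of heat kernels, treat the case of small $s/t$ by the polynomial factor alone, and otherwise invoke $|x|>1$ to bound the exponential from below and finish with a one-variable calculus argument. The only difference is bookkeeping — the paper splits part~(2) on $t \lessgtr 1/(2d)$ and maximizes the ratio over $t$ at fixed $s$, while you split on $u=s/t \lessgtr u_0$ and minimize over $u$; these are the same optimization in disguise, since the paper's critical point $t^*(s)=\frac{\sqrt{d^2s^2+1}-ds+1}{2d}$ and your $u^*(s)=\sqrt{1+d^2s^2}-1+ds$ are related by $u^*=s/t^*$.
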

\begin{proof}
By definition, we have $g(t+s,x)\geq (1-\eps)g(t,x)$ if and only if
\[
e^{\frac{|x|^2}{4} \left( \frac{1}{t} - \frac{1}{t+s} \right) }
\geq (1 - \varepsilon) \left( 1 + \frac{s}{t} \right)^{d/2}.
\]
The left-hand side is greater than or equal to 1, since the exponent is positive. 
Therefore, the inequality is true if the right-hand side is less than or equal to 1. 
Under condition (1), we have  
$s /t \in (0,\delta)$, so it suffices to make sure that
\[
(1 - \varepsilon) ( 1 + \delta )^{d/2} \leq 1.
\]
This holds if we choose
$\delta\leq \delta_1 = (1 - \varepsilon)^{-2/d} - 1$.

In case (2), if $t>1/(2d)$, we have $s/t<2d\delta$, so by (1), \eqref{eq:ineq2} holds if 
$s\leq \delta_2 = \delta_1/(2d)$. If $0<t\leq 1/(2d)$, then $|x|>1$ implies that
\[
\frac{g(t,x)}{g(t+s,x)} = \lp 1+\frac s 
t\rp^{d/2}e^{-\frac{|x|^2s}{4t(t+s)}} \leq \lp 1+\frac s 
t\rp^{d/2}e^{-\frac{s}{4t(t+s)}}. 
\]
For fixed $s>0$, elementary calculus shows that the term on the right-hand side 
reaches its unique maximum at
\[ t= \frac{\sqrt{d^2s^2+1}-ds+1}{2d} \geq \frac{1}{2d}. \]
Hence, for $t<1/(2d)$ and $s<\delta_2$, we obtain that 
\[ \frac{g(t,x)}{g(t+s,x)}  \leq \lp 1+2ds\rp^{d/2}e^{-\frac{d^2s}{2ds+1}}\leq 
(1+2d\delta_2)^{d/2} = (1-\eps)^{-1}.  \]
The claim now follows by taking $\delta(\eps)=\delta_1\wedge \delta_2 = \delta_2$.
\end{proof}

\blem\label{lem:simple}
Suppose that $f\colon (0,\infty)\to(0,\infty)$ is nondecreasing.
\benu
\item If $\int_1^\infty 1/f(t)\, \dd t < \infty$, then
\[
\lim_{t \to \infty} \frac{f(t)}{t} = \infty.
\]
\item If $\int_1^\infty 1/f(t) \,\dd t = \infty$, then 
\begin{equation} \label{eq:int1}
\int_1^\infty \frac{1}{f(t) \vee t}\, \dd t = \infty. 
\end{equation}
\eenu
\elem
\begin{proof}
	For (1), if we had
	$\liminf_{t \to \infty} f(t) / t  < \infty$, then there would be a sequence $(a_n)_{n\in\N}$ with $a_1=1$ and increasing to infinity such that $f(a_n) \leq Ca_n$ for all $n\in\N$. Then, because $f$ is nondecreasing,
	\[
	\int_1^\infty \frac{1}{f(t)} \,\dd t \geq \sum_{n=1}^\infty \int_{a_n}^{a_{n+1}} 
	\frac{1}{f(a_{n+1})}\, \dd t \geq \sum_{n=1}^\infty \frac{a_{n+1} - a_n}{C a_{n+1}} = \infty,
	\]
	which would be a contradiction, provided we can prove the last equality.
	
	For any fixed $m\in\N$, we have
	\[
	\sum_{n=m}^\infty \frac{a_{n+1} - a_n}{a_{n+1}} \geq \sum_{n=m}^{N-1} \frac{a_{n+1} - a_n}{a_{n+1}}\geq
	\sum_{n=m}^{N-1}  \frac{a_{n+1} - a_n}{a_{N}} = \frac{a_N - a_m}{a_N} \longrightarrow 1,
	\]
	as $N \to \infty$, so the infinite sum diverges as claimed.
	
	For (2), suppose that the integral in \eqref{eq:int1} were finite. As $f(t) \vee t$ 
	is nondecreasing, by the first part of the lemma,
	\[
	\lim_{t \to \infty} \frac{f(t) \vee t}{t} = \infty, 
	\]
	which would imply $f(t) \geq t$ for large values of $t$, and thus
	$\int_1^\infty 1/f(t) \,\dd t < \infty$, a contradiction.
\end{proof}

Finally, let us introduce some notation: $B(r) = \{ x \in \R^d\colon |x| \leq r \}$ 
denotes the ball with radius $r > 0$, and
$B(r_1, r_2) = \{ x \in \R^d\colon r_1 \leq |x| \leq r_2 \}$ for $0 < r_1 < r_2$.
Furthermore, $v_d$ is the volume of $B(1)$, and $\overline \lambda(x) = \lambda ( ( x, 
\infty))$. For $r > 0$ and $0 \leq r_1 < r_2$, we write
\beq\label{eq:def-C}
U(r) = \left\{ (y,z) \colon \frac{z}{|y|^d} > r,\, |y| \leq 1 \right\}, \qquad
U(r_1, r_2) = \left\{ (y,z) \colon \frac{z}{|y|^d} \in [r_1, r_2],\, |y| \leq 1 \right\}.
\eeq
If $\ell$ is the Lebesgue measure on $\R^d$, then short calculation gives
\begin{equation} \label{eq:C-vol}
\begin{split}
\ell \otimes \lambda ( U(r_1, r_2)) & = 
v_d \int_0^\infty \left( \frac{z}{r_1} \wedge 1 \right) - \left( \frac{z}{r_2} \wedge 1 
\right)\, \lambda(\dd z) \\
& = v_d \left[ ( r_1^{-1} - r_2^{-1}) \int_0^{r_1} z \,\lambda( \dd z) + 
\int_{r_1}^{r_2} \left( 1 - \frac{z}{r_2} \right) \,\lambda(\dd z) \right] \\
& =  \psi(r_1) - \psi(r_2),
\end{split}
\end{equation}
where 
\beq\label{eq:psi} 
\psi(r) = v_d\left( \frac1r \int_0^r z\,\la(\dd z) + \ov\la(r) \right) = 
\frac{v_d}{r}\int_0^r \ov\la(z)\,\dd z.
\eeq
We also use the simpler bound
\begin{equation} \label{eq:C-bound}
\ell \otimes \lambda ( U(r))  = v_d \int_0^\infty
\left( \frac{z}{r} \wedge 1  \right)\, \lambda(\dd z) 
\leq  \frac{v_d}{r} \int_0^\infty z \,\lambda(\dd z).
\end{equation}

\subsection{Recent close jumps}\label{sect:rc}

As mentioned in the introduction, and as can be seen from Figures \ref{fig} and \ref{fig2} and from 
Theorem \ref{thm:C}, the failure of the SLLN for $Y_0(t)$ is due to the recent close 
jumps, which we now examine in detail. 

We first
analyze the behavior of $Y_1(t)$ from \eqref{eq:Y1Y2} in continuous time and turn to the technically more involved setting in discrete time afterwards.


\begin{proposition}\label{prop:Y1-cont} Part (1) and (2) of Theorem~\ref{thm:cont} hold 
if $Y_0(t)$ is replaced by $Y_1(t)$ from \eqref{eq:Y1Y2}.
\end{proposition}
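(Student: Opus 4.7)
The whole analysis runs through the ``peak coordinate'' $\zeta_i/|\eta_i|^d$: by Lemma~\ref{lem:hk}(1), each summand $g(t-\tau_i,\eta_i)\zeta_i$ of $Y_1$ attains its maximum $C_d\zeta_i/|\eta_i|^d$ (with $C_d=(d/(2\pi e))^{d/2}$) at the localised time $t_i^\ast:=\tau_i+|\eta_i|^2/(2d)\in(\tau_i,\tau_i+1/(2d)]$. The distribution of these peak coordinates is controlled by the sets $U(r)$ from \eqref{eq:def-C} and by $\psi$ from \eqref{eq:psi}, which by \eqref{eq:C-bound} and \eqref{eq:int-lambda} satisfies $\psi(r)\asymp 1/r$ as $r\to\infty$. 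For both parts I will also need the uniform $L^p$-bound $\sup_{t>0}\E[|Y_1(t)|^p]\leq C$ for some $p\in(1,1+\eps]\cap(1,1+2/d)$, which follows from an $L^p$-inequality for compensated Poisson integrals (using the subadditivity of $x\mapsto x^{p/2}$ for $p\leq 2$), the Gaussian calculation $\int_0^1\int_{\bbr^d}g(s,y)^p\,\dd y\,\dd s<\infty$ valid for $p<1+2/d$, and the moment assumption in \eqref{eq:cond-lambda}.

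\textbf{Part (1).} Assume $\la((0,\infty))>0$ (the negative case is symmetric). Fix $M\in\bbn$ and let $E_n$ be the event that $\mu$ has a point with $\tau_i\in(n-1,n]$, $|\eta_i|\leq 1$, $\zeta_i>0$, and $\zeta_i/|\eta_i|^d>3C_d^{-1}Mf(n+1)$. By \eqref{eq:C-vol} restricted to positive $z$ and the asymptotic $\psi(r)\asymp 1/r$, one obtains $\p(E_n)\asymp 1/f(n+1)$; since $\int_1^\infty 1/f\,\dd t=\infty$ is equivalent to $\sum 1/f(n)=\infty$ by the integral test, $\sum \p(E_n)=\infty$. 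On $E_n$, a selected jump $i_n$ contributes at least $3Mf(n+1)$ to $Y_1$ at its peak time $t_n^\ast\in(n-1,n+1]$. Writing $Y_1^{(-i_n)}$ for $Y_1$ with $i_n$ removed and $G_n=\{|Y_1^{(-i_n)}(t_n^\ast)|\leq Mf(n+1)\}$, the uniform $L^p$-bound and Markov's inequality yield $\p(G_n^c)\leq C/f(n+1)^p=o(\p(E_n))$, hence $\p(E_n\cap G_n)\geq \p(E_n)/2$ eventually, and on $E_n\cap G_n$ we have $Y_1(t_n^\ast)\geq 2Mf(n+1)\geq Mf(t_n^\ast)$. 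Restricting to the subsequence $n_k=4k$ makes $E_{n_k}\cap G_{n_k}$ measurable with respect to $\mu$ on disjoint slabs $(n_k-2,n_k+1]\times B(1)\times\bbr$ and hence independent; the second Borel--Cantelli lemma then gives $\limsup_t Y_1(t)/f(t)\geq M$ a.s. Taking a countable union in $M$ yields $\limsup Y_1/f=\infty$.

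\textbf{Part (2).} Fix $\eps\in(0,1)$ and set $q_n=\eps f(n)/(2C_d)$. Decompose $Y_1(t)=Y_1^{\mathrm{big}}(t)+Y_1^{\mathrm{small}}(t)$ for $t\in[n,n+1]$ according to whether the peak coordinate $|\zeta_i|/|\eta_i|^d$ of the contributing jump exceeds $q_n$. The expected number of ``big'' points in $(n-1,n+1]\times B(1)\times\bbr$ is $2\psi(q_n)\leq C/f(n)$, summable by hypothesis; hence the first Borel--Cantelli lemma gives $Y_1^{\mathrm{big}}\equiv 0$ on $[n,n+1]$ for all large $n$ a.s. For the small part each jump contributes at most $C_dq_n=\eps f(n)/2$, and the uniform $L^p$-bound yields $\p(|Y_1^{\mathrm{small}}(t)|>\eps f(n))\leq C/f(n)^p$. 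Since $f(n)\geq 1$ eventually and $p>1$, $\sum 1/f(n)^p\leq \sum 1/f(n)<\infty$, so Borel--Cantelli gives $Y_1^{\mathrm{small}}(n)/f(n)\to 0$ almost surely. The supremum over $[n,n+1]$ is recovered by a grid of $K_n$ points in $[n,n+1]$ (with $K_n$ polynomial in $n$, so that the probability bound remains summable) together with an oscillation estimate that exploits \eqref{eq:gprimebound} for the smooth part of the evolution and the fact that every discontinuity of $Y_1$ (occurring at $\tau_i+1$, of size $g(1,\eta_i)|\zeta_i|\leq C|\zeta_i|$) is of size at most $Cq_n$ within the small part.

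\textbf{Main obstacle.} The only genuinely delicate step is the passage from integer times to the continuous supremum in Part (2): although every individual jump of $Y_1$ has controlled size, the number of such discontinuities in one unit interval may be infinite when $\la(\bbr)=\infty$, and the naive envelope $\sup|Y_1|\leq \sum C_d|\zeta_i|/|\eta_i|^d$ has infinite expectation because of the $|y|^{-d}$ singularity at the origin. Rigorously bounding the oscillation of $Y_1^{\mathrm{small}}$ between grid points therefore requires a separate $L^p$-moment estimate---obtained via \eqref{eq:gprimebound} for the smooth part of the dynamics and via a second application of the Poisson $L^p$-inequality to the compensated sum of discontinuities---rather than any pointwise envelope argument.
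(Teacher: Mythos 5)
Your Part (1) is essentially correct, though it takes a more elaborate route than the paper. The paper simply shows $\limsup_t Y_1^+(t)/f(t)=\infty$ by planting, with non-summable probability, a single close jump of size $>r$ in a ball of radius $(Kf(n+2))^{-1/d}$, and then handles the opposite-sign contribution $Y_1^-$ by an independence argument: pass to a sub-subsequence of the random times $t_n(\omega)$ along which $Y_1^-/f\to 0$, using that $Y_1^+$ and $Y_1^-$ are independent and that $\liminf_n Y_1^-(s_n)/f(s_n)=0$ for any deterministic sequence. You instead control the entire remainder $Y_1^{(-i_n)}(t_n^\ast)$ with a uniform $L^p$ bound at the random peak time, which works but requires an extra ingredient (the $L^p$ moment estimate plus a Palm/conditioning argument to justify applying it at a data-dependent time) that the paper's proof does not need.

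For Part (2), however, there is a genuine gap, one you yourself flag as "the only genuinely delicate step." Your plan is: kill the big jumps by Borel--Cantelli, control $Y_1^{\mathrm{small}}(n)$ at integer times by the uniform $L^p$ bound, and then interpolate to all $t\in[n,n+1]$ by a grid plus an oscillation estimate. That last step is not a technicality; it is exactly where the argument can fail, and your sketch does not close it. The natural envelope $\sup_{t\in[n,n+1]}Y_1^{\mathrm{small}}(t) \leq \sum_{\tau_i\in(n-1,n+1],\ |\eta_i|\leq 1} C_d\,|\zeta_i|/|\eta_i|^d \,\bone_{|\zeta_i|/|\eta_i|^d\leq q_n}$ has expectation of order $\log q_n \asymp \log f(n)$, so that $\sum_n \E[\,\cdot\,]/f(n) \asymp \sum_n \log f(n)/f(n)$ is \emph{not} necessarily summable under $\int_1^\infty 1/f<\infty$ (take $f(n)=n(\log n)^2$). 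An $L^p$ chaining bound on increments does not obviously help either, because the time derivative of $Y_1$ inherits a non-integrable $|\eta|^{-d-2}$ singularity from \eqref{eq:gprimebound}. The paper sidesteps this completely: instead of bounding $Y_1(n)$ and interpolating, it bounds $\sup_{t\in[n,n+1]}Y_1^+(t)$ \emph{directly} by partitioning the recent close jumps according to the peak coordinate $z/|y|^d$ into the bands $(r_{j+1,n},r_{j,n}]$ from \eqref{eq:rDelta1}, showing by Poisson tail bounds (the events $B_n,\dots,E_{n,j}$ in \eqref{eq:def-AD}) that the count in each band is a.s.\ eventually $\leq 2\Delta_{j,n}$, and then verifying the deterministic convergence $\sum_j r_{j,n}\Delta_{j,n}=O(1)$. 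That last sum is controlled by $\int_0^1 z|\log z|\,\la(\dd z)<\infty$ — the second half of hypothesis \eqref{eq:cond-lambda}. Notice your proposal never uses this condition anywhere, which is a clear signal that something essential is missing: the $|z\log z|$ moment is precisely what makes the jump-count envelope summable, and no amount of $L^p$ bookkeeping at grid points can substitute for it.
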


\begin{proof} It is enough to prove the statements when $\la((0,\infty))\neq 0$.
Let us first assume that   $\int_1^\infty 1/f(t) \,\dd t = \infty$.
Without loss of generality, we may assume that $f(t) \to \infty$. 
Introduce, for $K \geq1$ fixed, the events
\[
A_n = \left\{ \mu \left( [n, n+1] \times B( (Kf(n+2))^{-1/d}) \times (r,\infty) \right) 
\geq 1 \right\}, \qquad 
n \geq 0,
\]
where $r$ is chosen such that $\ov\la(r)=\la((r,\infty))>0$.
Then there is $C>0$, which is independent of $n$, such that
$\p ( A_n ) = 1 - e^{-v_d \ov\la(r) / (Kf(n+2)) } \geq C v_d\ov\la(r)/(K f(n+2))$,
and thus
\[
\sum_{n=1}^\infty \p ( A_n ) = \infty. 
\]
As the events $A_n$ are independent, the Borel--Cantelli lemma implies that $A_n$ occurs 
infinitely many times.

Recall that $Y_1^+(t)$ contains only the positive jumps in $Y_1$. If $n$ is large enough such that $(Kf(n+2))^{-2/d}/(2d)\leq1$, then
each time $A_n$ occurs, we have by \eqref{eq:gprop}, 
\[
\sup_{t \in [n, n+2]} Y^+_1(t) \geq \left(\frac{d}{2 \pi e} \right)^{d/2} K f(n+2)r,
\]
that is,
\[
\sup_{t \in [n, n+2]} \frac{Y_1^+(t)}{f(t)} \geq
\left(\frac{d}{2 \pi e} \right)^{d/2} K r.
\]
Since $A_n$ happens infinitely often and $K$ is arbitrarily large, we obtain
\beq\label{eq:help} \limsup_{t\to\infty}\frac{Y^+_1(t)}{f(t)} = \infty\quad \text{a.s.}\eeq
Let $t_n=t_n(\om)$ be a subsequence on which $Y^+_1(t_n(\om))(\om)/f(t_n(\om))\to\infty$ for almost all $\omega$.
Since $Y^+_1$ and $Y^-_1$ are independent, we can choose another sufficiently fast 
subsequence of $t_n(\omega)$, denoted by $t_{n_k}(\omega)=t_{n_k(\omega)}(\omega)$, 
on which $Y^-_1(t_{n_k}(\om))(\om)/f(t_{n_k}(\om)) \to 0$ as $k\to\infty$; see the argument after \eqref{eq:toshow} in
the proof of Proposition~\ref{prop:rec-close}. Hence, 
$Y_1(t_{n_k}(\om))(\om)/f(t_{n_k}(\om)) \to \infty$ as $k\to\infty$, which is the claim.

We now turn to the second part. Recalling the sets introduced in \eqref{eq:def-C}, we consider for $K \geq1$ the events
\begin{equation} \label{eq:def-AD}
\begin{split}
B_n & = \left\{ \mu \left( [n, n+1] \times U( f(n) / K ) \right) \geq 1 \right\}, \\
C_n & = \left\{ \mu \left( [n, n+1] \times U( n/(K\log n), f(n)/K ) \right) \geq 2 
\right\},\\
D_n & = \left\{ \mu \left( [n, n+1] \times U( 1,n/(K\log n) ) \right) \geq 6 \log n 
\right\},\\
E_{n, 0} & = \left\{ \mu \left( [n, n+1] \times U( 1/ \sqrt{n} , 1 ) \right) 
\geq 2 v_d  m_\lambda(1)\sqrt{n} \right\},\\
E_{n, j} & = \left\{ \mu \left( [n, n+1] \times 
U ( r_{j+1, n} , r_{j,n} ) \right) \geq 2 \Delta_{j, n} \right\}, \qquad n,j \geq 1,
\end{split}
\end{equation}
where the numbers $r_{j,n}$ and $\Delta_{j,n}$ are defined as
\beq\label{eq:rDelta1}
\begin{split}
& r_{1,n}=\frac{1}{\sqrt{n}},\qquad r_{j+1, n} =  \sup \{ r>0: \, 
\psi(r) \geq 
\psi(r_{j,n}) + 16 \log (j f(n)) \}, \\
& \Delta_{j,n} = \psi(r_{j+1,n})-\psi(r_{j,n})= 16 \log (jf(n)),\qquad n,j\geq1.
\end{split}
\eeq
Figure~\ref{table2} illustrates the partitioning induced by these sets.
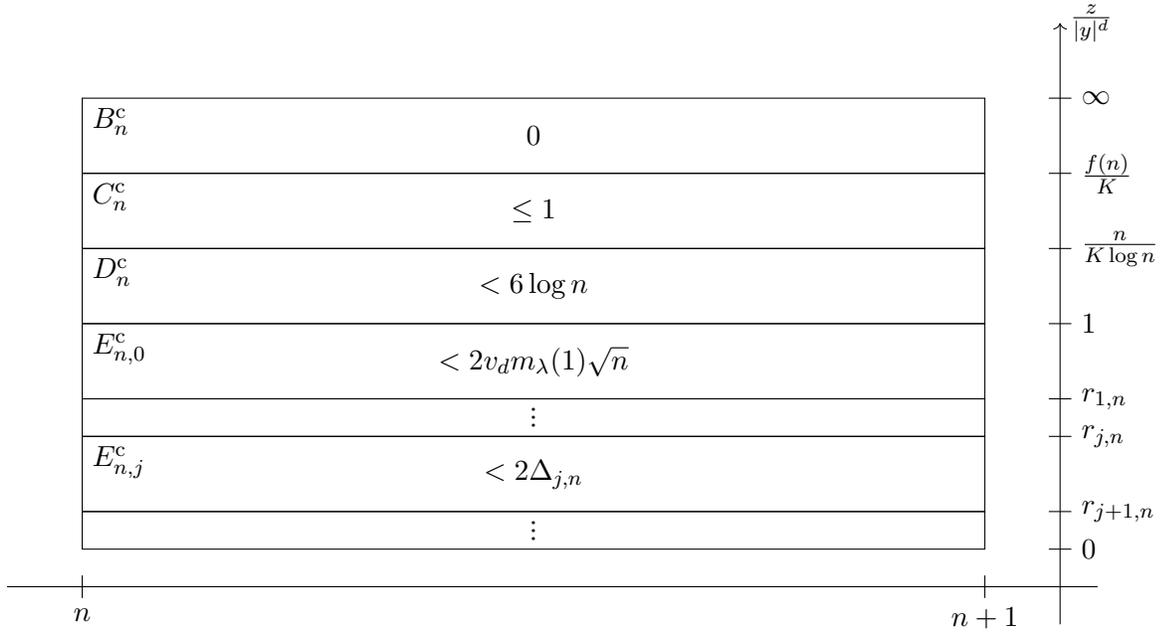
\begin{figure}
	\centering
	\begin{tikzpicture}
	\draw (0,0) rectangle node{$0$} (12,-1);
	\draw (0,-1) rectangle node{$\leq1$} (12,-2);
	\draw (0,-2) rectangle node{$<6\log n$} (12,-3);
	\draw (0,-3) rectangle node{$<2v_d m_\la(1)\sqrt{n}$} (12,-4);
	\draw (0,-4) rectangle node[rotate=-90]{...} (12,-4-0.5);
	\draw (0,-4-0.5) rectangle node{$<2\Delta_{j,n}$} (12,-5-0.5);
	\draw (0,-5-0.5) rectangle node[rotate=-90]{...} (12,-6);
	
	\draw[-] (13.5,-6.5) -- (-1,-6.5) ;
	\draw[->] (13,-7) -- (13,1) node[right]{$\frac{z}{|y|^d}$};
	
	\draw (12,-6.5+0.15) -- (12,-6.5-0.15) node[below] {$n+1$};
	\draw (0,-6.5+0.15) -- (0,-6.5-0.15) node[below] {$n$};
	
	\draw (13-0.15,0) -- (13+0.15,0) node[right] {$\infty$};
	\draw (13-0.15,-1) -- (13+0.15,-1) node[right] {$\frac{f(n)}{K}$};
	\draw (13-0.15,-2) -- (13+0.15,-2) node[right] {$\frac{n}{K\log n}$};
	\draw (13-0.15,-3) -- (13+0.15,-3) node[right] {$1$};
	\draw (13-0.15,-4) -- (13+0.15,-4) node[right] {$r_{1,n}$};
	\draw (13-0.15,-4.5) -- (13+0.15,-4.5) node[right] {$r_{j,n}$};
	\draw (13-0.15,-5.5) -- (13+0.15,-5.5) node[right] {$r_{j+1,n}$};
	\draw (13-0.15,-6) -- (13+0.15,-6) node[right] {$0$};
	
	\draw (0,0) node[below right] {$B_n^\Comp$};
	\draw (0,-1) node[below right] {$C_n^\Comp$};
	\draw (0,-2) node[below right] {$D_n^\Comp$};
	\draw (0,-3) node[below right] {$E_{n,0}^\Comp$};
	\draw (0,-4.5) node[below right] {$E_{n,j}^\Comp$};
	\end{tikzpicture}
	\caption{
		Restrictions on the maximal number of 
		jumps of $\mu$ if none of the events $B_n$--$E_{n,j}$ occur. The vertical direction $z/|y|^d$ indicates, up to a constant, the maximum peak size
		caused by a jump according to Lemma~\ref{lem:hk}. 
	}
	\label{table2}
\end{figure}

As the average of the decreasing function $\ov \la$, the function 
$\psi$ introduced in \eqref{eq:psi} is continuous and decreasing on $(0,\infty)$. 
Furthermore, since adding jumps to the Poisson random measure $\mu$ would only increase 
$Y_1(t)/t$, we may assume without loss of generality that 
$\la((0,\infty))=\infty$. In this case, we further have $\psi(0+)=\infty$, and $\psi$ is 
strictly decreasing. 
So for every $n\geq1$, the sequence $(r_{j,n})_{j\geq1}$ is strictly decreasing 
to $0$. 

Next, by Lemma~\ref{lem:Poisson} and the relations \eqref{eq:C-vol} and \eqref{eq:C-bound}, one can readily 
check that
\[\begin{split}
\p ( B_n ) &\leq v_d m_\la(1) \frac K {f(n)},\quad \p ( C_n ) \leq \frac{K^2v_d^2 
m_\la(1)^2(\log n)^2}{2 n^2},\quad \p ( D_n ) \leq n^{- 6 \delta_0}, \\
\p(E_{n,0}) &\leq e^{-v_d\delta_0m_\la(1)\sqrt{n}},\quad \p(E_{n,j}) \leq 
e^{-\delta_0\Delta_{j,n}}.
\end{split}\]
By the integrability assumption on $f$ and the fact that $6\delta_0>1$, all these probabilities are summable in $n$ and $j$, so 
the Borel--Cantelli lemma shows that almost surely, only finitely many of the events 
in \eqref{eq:def-AD} occur. Hence, if $n$ is large and $t \in [n, n+1]$, we have by 
\eqref{eq:gprop},
	\begin{align*} Y^+_1(t) &\leq \sum_{k = 0}^1 
	\sum_{\substack{\tau_i \in [n-k, n-k+1],  \\ \tau_i \leq t,\, |\eta_i| \leq 1 }}
	g(t-\tau_i, \eta_i)\zeta_i\\ 
	&\leq  2 \left(\frac{d}{2 \pi e} \right)^{d/2} \lp 
\frac{ f(n) + 6 n}{K} + 2v_dm_\la(1)\sqrt{n} + 2\sum_{j=1}^\infty r_{j,n}\Delta_{j,n} 
\rp.
	\end{align*}
The fact that $ \sum_{j=1}^\infty r_{j,n}\Delta_{j,n} < \infty$ is proved 
in a more general set-up at the end of the proof of Proposition \ref{prop:rec-close};
see \eqref{eq:sum-r-Delta}. Moreover, since $f$ is nondecreasing and $\int_0^\infty 1/f(t)\, 
\dd 
t < \infty$, we have
$t / f(t) \to 0$ as $t\to\infty$ by Lemma~\ref{lem:simple}. Thus, we see that
\[
\limsup_{t \to \infty} \frac{Y^+_1(t)}{f(t)} \leq \left(\frac{d}{2 \pi e} 
\right)^{d/2} \frac{2}{K}, 
\]
and we obtain the statement by letting $K \to \infty$. An analogous argument 
applies to $Y^-_1(t)$.
\end{proof}

For discrete subsequences, we need to refine the techniques applied in the proof of Proposition~\ref{prop:Y1-cont}.

\begin{proposition}\label{prop:rec-close}
Assume that
$(t_n)_{n\in\N}$ is a nondecreasing sequence tending to infinity. If $\la$ satisfies the 
second condition in \eqref{eq:cond-lambda}, the series in \eqref{eq:t-cond-0} (resp., 
\eqref{eq:t-cond-1}) is finite, and $f$ is unbounded, then
\[
\limsup_{n\to\infty} \frac{Y_1(t_n)}{f(t_n)} = 
0\qquad\left(\text{resp.}\quad \liminf_{n \to \infty} \frac{Y_1(t_n)}{f(t_n)} = 0 
\right)\qquad\text{a.s.}
\]
\end{proposition}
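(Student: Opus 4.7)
I would mirror the architecture of the proof of Proposition~\ref{prop:Y1-cont}, replacing the unit time intervals $[n,n+1]$ by the disjoint sub-intervals $(t_{n-1},t_n]$ and incorporating the new ingredient that a jump $(\tau,y,z)$ only harms the ratio $Y_1(t_n)/f(t_n)$ when $t_n$ happens to lie in its peak window, which by Lemma~\ref{lem:hk} has width $\sim (z/f(t_n))^{2/d}\wedge 1$. Without loss of generality I would assume $\la((0,\infty))=\infty$ (adding positive jumps only increases $Y_1^+$) and $f(t_n)\to\infty$, and prove $\limsup_n Y_1^+(t_n)/f(t_n)=0$ almost surely; the symmetric statement for $Y_1^-$ uses \eqref{eq:t-cond-1}.

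Fix $\eps>0$ and $K\geq 1$. The main event to control would be
\[
B_n:=\bigl\{\exists\,(\tau_i,\eta_i,\zeta_i)\in\mu\colon \tau_i\in(t_{n-1},t_n],\,|\eta_i|\leq 1,\,\zeta_i/|\eta_i|^d>f(t_n)/K,\,g(t_n-\tau_i,\eta_i)\zeta_i>\eps f(t_n)\bigr\}.
\]
The disjointness of the sub-intervals $(t_{n-1},t_n]$ ensures that a single jump of large potential peak triggers at most one $B_n$. Integrating the Poisson intensity $\dd\tau\otimes\dd y\otimes\la(\dd z)$ and using Lemma~\ref{lem:hk} to identify the peak window, one obtains the key bound
\[
\p(B_n)\lec K^{1+2/d}\int_0^\infty \left[\left(\frac{z}{f(t_n)}\right)^{2/d}\wedge\Delta t_n\right]\frac{z}{f(t_n)}\,\la(\dd z),
\]
where the factor $\wedge\Delta t_n$ arises because, for a jump of parameters $(y,z)$ whose peak-window width is $\sim|y|^2$, the Lebesgue measure of admissible $\tau$ within the sub-interval of length $\Delta t_n$ is exactly $\min(|y|^2,\Delta t_n)$. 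Hypothesis \eqref{eq:t-cond-0} then gives $\sum_n\p(B_n)<\infty$, and the first Borel-Cantelli lemma yields $\p(B_n\text{ i.o.})=0$.

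For jumps with peak potential $\leq f(t_n)/K$, I would reuse the dyadic-in-peak-size layer construction of \eqref{eq:def-AD}--\eqref{eq:rDelta1}, substituting $n$ by $t_n$ throughout. The probabilities of the analogous events $C_n$, $D_n$, $E_{n,0}$, $E_{n,j}$ are estimated via Lemma~\ref{lem:Poisson}, and their summability in $n$ and $j$ follows as in Proposition~\ref{prop:Y1-cont}, with the inner sum $\sum_j r_{j,n}\Delta_{j,n}$ remaining uniformly bounded in $n$ thanks to the entropy condition $\int_{-1}^1|z\log z|\,\la(\dd z)<\infty$. On the complement of $\bigcup_{n\geq N}(B_n\cup C_n\cup D_n\cup\bigcup_{j\geq 0}E_{n,j})$ for $N$ large, Lemma~\ref{lem:hk} yields $Y_1^+(t_n)/f(t_n)\leq C/K+o(1)$, and letting $K\to\infty$ finishes the proof.

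The delicate step, and the main obstacle, is producing the $\wedge\Delta t_n$ factor in $\p(B_n)$. A naive union bound would only yield $\p(B_n)\lec\int(z/f(t_n))^{1+2/d}\,\la(\dd z)$, whose summability over $n$ would demand the $(1+2/d)$-moment of Corollary~\ref{cor:1} rather than the sharper condition \eqref{eq:t-cond-0}. The saving comes from exploiting the disjointness of the $\tau$-sub-intervals: when the peak window $|y|^2$ exceeds $\Delta t_n$, even though a single jump is ``always near peak'' inside its own sub-interval, the Lebesgue measure of $\tau\in(t_{n-1},t_n]$ for which the peak window of the corresponding jump actually contains $t_n$ remains capped at $\Delta t_n$. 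Making this measure bound rigorous and aligning it correctly with the layer decomposition is the technical heart of the argument.
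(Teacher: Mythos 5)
Your high-level picture is right — the peak window of a jump $(\tau,\eta,\zeta)$ has width $\approx |\eta|^2\approx(\zeta/f(t_n))^{2/d}$ and capping this by $\Delta t_n$ inside a $\tau$-interval of length $\Delta t_n$ is exactly what produces the factor $(z/f(t_n))^{2/d}\wedge\Delta t_n$ in \eqref{eq:t-cond-0} — but two essential steps in your scheme do not go through as stated.

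First, your event $B_n$ constrains $\tau_i\in(t_{n-1},t_n]$, whereas $Y_1^+(t_n)$ sums over all $\tau_i\in(t_n-1,t_n]$ with $|\eta_i|\leq 1$. When $\Delta t_n<1$ this leaves the range $\tau_i\in(t_n-1,t_{n-1}]$ uncovered, and jumps there can be exactly the dangerous ones: a jump at $\tau\in(t_{m-1},t_m]$, $m<n$, with $t_m-\tau\ll|\eta|^2/(2d)$ is \emph{not yet hot} at $t_m$ (so $B_m$ does not catch it) but can be at its peak precisely near $t_n$ (and $B_n$ cannot see it either, since $\tau\notin(t_{n-1},t_n]$). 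Your sentence ``a single jump of large potential peak triggers at most one $B_n$'' is thus the wrong way around: the concern is not overcounting but \emph{undercounting}. The paper handles this by covering the whole slab $s\in[t_n-(1\wedge\Delta t_n),t_n]$ with three nested events ($A_n$, then the paper's $B_n$, then $E_n$), whose time windows are keyed to $(Kz/f(t_n))^{2/d}$, $(Kz/f(t_n)^{3/4})^{2/d}$ and $\Delta t_n\wedge 1$ respectively, with the jump-count thresholds ($0$, $1$, $O(\log f(t_n))$) tuned so that the permitted jumps contribute $O(f(t_n)/K)$, $O(f(t_n)/K)$ and $O(f(t_n)^{3/4}\log f(t_n)/K)$ in total. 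Your single event with one threshold cannot replicate that bookkeeping.

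Second, ``substituting $n$ by $t_n$ throughout'' in \eqref{eq:def-AD}--\eqref{eq:rDelta1} is not a harmless change of variable. In the continuous-time proof the probabilities $\p(D_n)\lesssim n^{-6\delta_0}$ and $\p(C_n)\lesssim(\log n)^2/n^2$ are summable because the index $n$ enters directly; after the substitution they become $t_n^{-6\delta_0}$ and $(\log t_n)^2/t_n^2$, which fail to be summable for slowly growing sequences (e.g.\ $t_n=n^{1/3}$, which is exactly the interesting regime). Moreover the intervals $[t_n,t_n+1]$ (or $[t_n-1,t_n]$) overlap heavily when $\Delta t_n<1$, so a single jump is counted in many $C_n$, $D_n$, and nothing like your $\wedge\Delta t_n$ ``saving'' appears. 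In the paper this is repaired at every layer: the events $C_n,D_n,E_n,F_{n,j}$ of Proposition~\ref{prop:rec-close} all carry the $\wedge\Delta t_n$ cap on the $s$-window, the cuts are taken at fractional powers $f(t_n)^{3/(4d)}$, $f(t_n)^{1/(d\vee2)}$ rather than $t_n/\log t_n$ or $\sqrt{t_n}$, and the count thresholds use $\log f(t_n)$ so that each probability is $\lesssim p_n$, the $n$-th term of the series \eqref{eq:t-cond-0} (see \eqref{eq:A-C-prob}--\eqref{eq:Fj-prob-2}), rather than some negative power of $t_n$. Obtaining summability of the whole family of events in terms of $\sum_n p_n$, instead of in terms of an assumed growth of $t_n$, is the genuine content of the proof and is what your outline leaves unresolved.
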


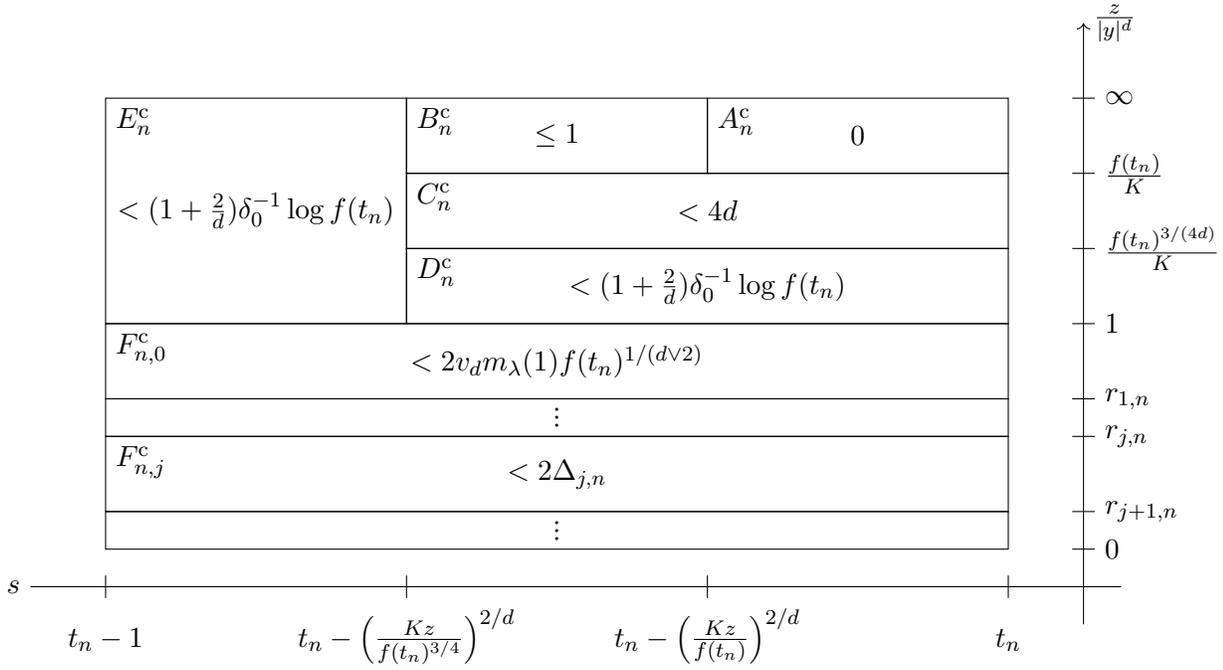
\begin{figure}[tb]
	\centering
	\begin{tikzpicture}
	\draw (0,0) rectangle node{$<(1+\frac2d)\delta_0^{-1}\log f(t_n)$} (4,-3);
	\draw (4,0) rectangle node{$\leq1$} (8,-1);
	\draw (8,0) rectangle node{$0$} (12,-1);
	\draw (4,-1) rectangle node{$<4d$} (12,-2);
	\draw (4,-2) rectangle node{$<(1+\frac2d)\delta_0^{-1}\log f(t_n)$} (12,-3);
	\draw (0,-3) rectangle node{$<2v_d m_\la(1)f(t_n)^{1/(d\vee2)}$} (12,-4);
	\draw (0,-4) rectangle node[rotate=-90]{...} (12,-4-0.5);
	\draw (0,-4-0.5) rectangle node{$<2\Delta_{j,n}$} (12,-5-0.5);
	\draw (0,-5-0.5) rectangle node[rotate=-90]{...} (12,-6);
	
	\draw[-] (13.5,-6.5) -- (-1,-6.5) node[left]{$s$};
	\draw[->] (13,-7) -- (13,1) node[right]{$\frac{z}{|y|^d}$};
	
	\draw (12,-6.5+0.15) -- (12,-6.5-0.15) node[below] {$t_n\vphantom{\lp\frac{Kz}{f(t_n)^{3/4}}\rp^{2/d}}$};
	\draw (8,-6.5+0.15) -- (8,-6.5-0.15) node[below] {$t_n-\lp\frac{Kz}{f(t_n)}\rp^{2/d}$};
	\draw (4,-6.5+0.15) -- (4,-6.5-0.15) node[below] {$t_n-\lp\frac{Kz}{f(t_n)^{3/4}}\rp^{2/d}$};
	\draw (0,-6.5+0.15) -- (0,-6.5-0.15) node[below] {$t_n-1\vphantom{\lp\frac{Kz}{f(t_n)^{3/4}}\rp^{2/d}}$};
	
	\draw (13-0.15,0) -- (13+0.15,0) node[right] {$\infty$};
	\draw (13-0.15,-1) -- (13+0.15,-1) node[right] {$\frac{f(t_n)}{K}$};
	\draw (13-0.15,-2) -- (13+0.15,-2) node[right] {$\frac{f(t_n)^{3/(4d)}}{K}$};
	\draw (13-0.15,-3) -- (13+0.15,-3) node[right] {$1$};
	\draw (13-0.15,-4) -- (13+0.15,-4) node[right] {$r_{1,n}$};
	\draw (13-0.15,-4.5) -- (13+0.15,-4.5) node[right] {$r_{j,n}$};
	\draw (13-0.15,-5.5) -- (13+0.15,-5.5) node[right] {$r_{j+1,n}$};
	\draw (13-0.15,-6) -- (13+0.15,-6) node[right] {$0$};
	
	\draw (0,0) node[below right] {$E_n^\Comp$};
	\draw (4,0) node[below right] {$B_n^\Comp$};
	\draw (8,0) node[below right] {$A_n^\Comp$};
	\draw (4,-1) node[below right] {$C_n^\Comp$};
	\draw (4,-2) node[below right] {$D_n^\Comp$};
	\draw (0,-3) node[below right] {$F_{n,0}^\Comp$};
	\draw (0,-4.5) node[below right] {$F_{n,j}^\Comp$};
	\end{tikzpicture}
	\caption{
		Restrictions on the maximal number of 
		jumps of $\mu$ if none of the events $A_n$--$F_{n,j}$ occur. The horizontal direction $s$ indicates how recent the jumps are relative to $t_n$. The $\wedge$-constraints in the $s$-coordinate, which make sure that only recent jumps are considered and that the sets $A_n$--$F_{n,j}$ are disjoint for different values to $t_n$, are not shown for the sake of clarity. 
	}
	\label{table}
\end{figure}

\begin{proof}
It suffices by symmetry to show 
the statement concerning the limit superior.  The claim now 
follows if we can show that the finiteness of the series in \eqref{eq:t-cond-0} implies
\beq\label{eq:toshow} \lim_{n\to\infty} \frac{Y^{+}_1(t_n)}{f(t_n)} =  0\quad\text{a.s.} 
\eeq 
Indeed, \eqref{eq:toshow} immediately gives $\limsup_{n\to\infty} {Y_1(t_n)}/{f(t_n)} \leq 
0$. For the other direction, observe that for a sufficiently fast subsequence 
$(t_{n_k})_{k\in\N}$ of $(t_n)_{n\in\N}$, the series in \eqref{eq:t-cond-1} will be 
finite because $f$ is unbounded, so \eqref{eq:toshow} and a symmetry argument prove that 
$Y^{-}_1(t_{n_k})/f(t_{n_k}) \to 0$ a.s.\ as $k\to\infty$. 
Together with \eqref{eq:toshow}, this means that $\limsup_{n\to\infty} {Y_1(t_n)}/{f(t_n)} 
\geq 0$. Actually, this argument shows that for \emph{any} $f$ increasing to infinity, and for 
\emph{any} $t_n$ increasing to infinity,
\begin{equation} \label{eq:liminfsup}
\limsup_{n \to \infty} \frac{Y_1^{-}(t_n)}{f(t_n)} = 0, \qquad
\liminf_{n \to \infty} \frac{Y_1^{+}(t_n)}{f(t_n)} = 0.
\end{equation}

In order to prove \eqref{eq:toshow}, it is no restriction to assume $\la((-\infty,0))=0$ 
and $m_0=0$, in which case we have $m_\lambda(1) = \int_0^\infty z \,\lambda(\dd z)$ and 
$Y^{+}_1 = Y_1$. Next, we redefine the numbers in \eqref{eq:rDelta1} by setting
\begin{equation} \begin{split}\label{eq:Delta-n}
r_{1,n}&=\frac{1}{f(t_n)^{1/(d\vee 2)}},\qquad r_{j+1, n} =  \sup \{ r>0: \, \psi(r) 
\geq 
\psi(r_{j,n}) + 16 \log (jf(t_n)) \},\\
\Delta_{j,n} &= \psi(r_{j+1,n})-\psi(r_{j,n})= 16 \log 
(jf(t_n)),\qquad n,j\geq1.
\end{split}
\end{equation}
As explained after \eqref{eq:rDelta1}, we may assume that for every $n\geq1$, the sequence $(r_{j,n})_{j\geq1}$ is strictly decreasing to $0$.

Now let $K>1$ be arbitrary but fixed for the moment. 
Recalling the definition of $\delta_0$ 
from Lemma~\ref{lem:Poisson}, we then consider the following events 
for $n,j\geq1$ (see also Figure~\ref{table} for a summary picture):
\allowdisplaybreaks[3]
\begin{align*}
	A_n&=\left\{ \mu\lp \left\{ (s,y,z): s\in\lb   t_n - \left(1\wedge \left( 
	\frac{Kz}{f(t_n)}\right)^{2/d}\wedge \Delta t_n\right),t_n\rb,\, (y,z)\in U\lp\frac 
	{f(t_n)}{K}\rp \right\} \rp \geq1 \right\},\\
	B_n&=\Bigg\{ \mu\Bigg( \Bigg\{ (s,y,z): s\in\lb  t_n - \left(1\wedge \left( 
		\frac{Kz}{f(t_n)^{3/4}}\right)^{2/d} \wedge \Delta t_n\right), t_n - \left(1\wedge \left( 
		\frac{Kz}{f(t_n)}\right)^{2/d}\wedge \Delta t_n\right)\rb,\\
		&\quad\qquad (y,z)\in U\lp\frac 
	{f(t_n)}{K}\rp \Bigg\} \Bigg) \geq 2 \Bigg\},\\
	C_n&=\Bigg\{ \mu\Bigg( \Bigg\{ (s,y,z): s\in\lb t_n - \left(1\wedge \left( 
	\frac{Kz}{f(t_n)^{3/4}}\right)^{2/d}\wedge \Delta t_n\right),t_n\rb,\\
	&\quad\qquad (y,z)\in U\left(\frac{f(t_n)^{3/(4d)}}{K},\frac 
	{f(t_n)}{K}\right) \Bigg\} \Bigg) \geq 4d \Bigg\},\\
	D_n&=\Bigg\{ \mu\lp \left\{ (s,y,z): s\in\lb t_n - \left(1\wedge \left( 
	\frac{Kz}{f(t_n)^{3/4}}\right)^{2/d}\wedge \Delta t_n\right),t_n\rb,\,(y,z)\in U\left(1,\frac 
	{f(t_n)^{3/(4d)}}{K}\right) \right\} \rp\\
	&\quad\qquad \geq (1+\textstyle \frac2d)\delta_0^{-1} \log f(t_n) \Bigg\},\\
	E_n&=\Bigg\{ \mu\lp \left\{ (s,y,z): s\in\lb  t_n - (1 \wedge \Delta t_n), 
t_n - \left(1\wedge \left( 
	\frac{Kz}{f(t_n)^{3/4}}\right)^{2/d}\wedge \Delta t_n\right)\rb,\,(y,z)\in U(1) \right\} 
	\rp\\
	&\quad\qquad \geq (1+\textstyle \frac2d)\delta_0^{-1} \log f(t_n) \Bigg\},\\
	F_{n,0}&=\Big\{ \mu\lp \left\{ (s,y,z): s\in [  t_n - (1\wedge \Delta t_n), t_n 
	],\,(y,z)\in U( r_{1,n},1) \right\} \rp \geq 2v_dm_\la(1)f(t_n)^{1/(d\vee 2)} \Big\},\\
	F_{n,j}&=\Big\{ \mu\lp \left\{ (s,y,z): s\in [  t_n - (1\wedge \Delta t_n), t_n 
	],\,(y,z)\in U( r_{j+1,n}, r_{j,n}) \right\} \rp \geq 2\Delta_{j,n} \Big\}.
\end{align*}


Each $\mu(\cdot)$ variable follows a Poisson distribution, so we can use 
Lemma~\ref{lem:Poisson} to estimate the probabilities of these events. Writing 
$$p_n=\int_0^\infty \lp \lp \frac{z}{f(t_n)}\rp^{2/d}\wedge \Delta t_n\rp 
\frac{z}{f(t_n)}\,\la(\dd z),$$ 
we obtain 
\beq\label{eq:A-C-prob}
\begin{split}
\p(A_n) & \leq v_d\int_0^\infty \lp \lp \frac{Kz}{f(t_n)}\rp^{2/d}\wedge \Delta 
t_n\wedge 1\rp 
\frac{Kz}{f(t_n)}\,\la(\dd z)\leq v_d K^{1+2/d} p_n,\\
\p(B_n)&\leq \frac{(K^{1+2/d}v_d)^2}{2}\lp \int_0^\infty  \lp
\lp\frac{z}{f(t_n)^{3/4}}\rp^{2/d}\wedge \Delta t_n \wedge 1\rp 
\frac{z}{f(t_n)}\,\la(\dd z) \rp^2 \\
&\leq C\lp \int_0^\infty  \lp
\lp\frac{z}{f(t_n)}\rp^{2/d}\wedge \Delta t_n \wedge 1\rp 
\frac{z}{f(t_n)^{1-1/(2d)}}\,\la(\dd z) \rp^2\\
& \leq C \int_0^\infty  \lp
\lp\frac{z}{f(t_n)}\rp^{2/d}\wedge \Delta t_n\rp 
\frac{z}{f(t_n)^{1-1/(2d)}}\,\la(\dd z) \frac{m_\la(1)}{f(t_n)^{1-1/(2d)}}\leq Cp_n,\\
\p(C_n)&\leq \frac{(K^{1+2/d}v_d)^{4d}}{(4d)!}\lp \int_0^\infty  \lp
\lp\frac{z}{f(t_n)^{3/4}}\rp^{2/d}\wedge \Delta t_n \wedge 1\rp 
\frac{z}{f(t_n)^{3/(4d)}}\,\la(\dd z) \rp^{4d}\\
&\leq C\lp \int_0^\infty  \lp
\lp\frac{z}{f(t_n)}\rp^{2/d}\wedge \Delta t_n \wedge 1\rp 
\frac{z}{f(t_n)^{1/(4d)}}\,\la(\dd z) \rp^{4d}\\
& \leq C \int_0^\infty  \lp
\lp\frac{z}{f(t_n)}\rp^{2/d}\wedge \Delta t_n\rp 
\frac{z}{f(t_n)^{1/(4d)}}\,\la(\dd z) \frac{m_\la(1)^{4d-1}}{f(t_n)^{1-1/(4d)}}\leq Cp_n. 
\end{split}\eeq

Next, we make the following observation: on the one hand, if $\Delta t_n\geq 
(R/f(t_n))^{2/d}$, where $R\in(0,\infty)$ is such that $\la((0,R])\neq 0 \neq 
\la([R,\infty))$, then 
\beq\label{eq:pn1} p_n \geq \int_0^{f(t_n)(\Delta t_n)^{d/2}} 
\frac{z^{1+2/d}}{f(t_n)^{1+2/d}} \,\la(\dd z) \geq \frac{1}{f(t_n)^{1+2/d}} \int_0^R 
z^{1+2/d}\,\la(\dd z)\geq \frac{C}{f(t_n)^{1+2/d}};\eeq
on the other hand, if $\Delta t_n< (R/f(t_n))^{2/d}$, then
\beq\label{eq:pn2} p_n\geq \frac{\Delta t_n}{f(t_n)}\int_{f(t_n)(\Delta t_n)^{d/2}}^\infty 
z\,\la(\dd z) \geq C\frac{\Delta t_n}{f(t_n)}. \eeq
As a consequence, upon noticing from \eqref{eq:C-bound} that the intensity of the Poisson 
variables in the definition of $D_n$ and $E_n$ is bounded by $v_d m_\la(1)(\Delta 
t_n\wedge 1)$, we obtain from both parts of Lemma~\ref{lem:Poisson} that for large values 
of $n$,
\beq\label{eq:D-E-prob}
\p(D_n) \vee \p(E_n)\leq \begin{cases} e^{-\delta_0(1+2/d)\delta_0^{-1} \log f(t_n)} = 
\displaystyle \frac{1}{f(t_n)^{1+2/d}}\leq Cp_n &\text{if } \Delta t_n\geq 
(R/f(t_n))^{2/d},\\ \displaystyle \frac{(v_d m_\la(1)(\Delta t_n\wedge 1))^{\lceil 1+\frac 
d 2\rceil}}{\lceil 1+\frac 
d 2\rceil!} \leq C\frac{\Delta t_n}{f(t_n)}\leq Cp_n&\text{if } \Delta 
t_n< (R/f(t_n))^{2/d}.\end{cases} 
\eeq

For the sets $F_{n,j}$, we distinguish between the same two cases as in 
\eqref{eq:D-E-prob}. Since  \eqref{eq:C-vol} and \eqref{eq:C-bound} imply that the 
intensity of the respective Poisson variable is bounded by $(1\wedge \Delta t_n)v_d 
m_\la(1)r_{1,n}^{-1} = 
(1\wedge \Delta t_n)v_d m_\la(1)f(t_n)^{1/(d \vee 2)}$ for $j=0$, and by $(1\wedge\Delta 
t_n)(\psi(r_{j+1,n})-\psi(r_{j,n})) = 
(1\wedge \Delta t_n)\Delta_{j,n}$ for $j\geq1$, we obtain from Lemma~\ref{lem:Poisson} and 
the inequalities \eqref{eq:pn1} and \eqref{eq:pn2}, 
\beq\label{eq:F0-prob} \p(F_{n,0})\leq \begin{cases} 
e^{-2\delta_0v_dm_\la(1)f(t_n)^{1/(d\vee 2)}} \leq \displaystyle \frac{C}{f(t_n)^{1+2/d}} 
\leq Cp_n &\text{if } \Delta t_n\geq (R/f(t_n))^{2/d}, \\ \displaystyle\frac{((1\wedge 
\Delta t_n)v_d m_\la(1)f(t_n)^{1/(d\vee2)})^{2+d}}{(2+d)!}\leq C\frac{\Delta 
t_n}{f(t_n)}\leq Cp_n &\text{if } \Delta t_n< (R/f(t_n))^{2/d}. \end{cases}\eeq
Similarly, using the relation $16\delta_0 = 3.090\ldots \geq 3\geq 1+2/d$, we deduce for 
$\Delta t_n\geq (R/f(t_n))^{2/d}$,
\beq\label{eq:Fj-prob-1}
\p(F_{n,j}) \leq  e^{-\delta_0 \Delta_{j,n}} = (jf(t_n))^{-16\delta_0} \leq 
j^{-16\delta_0}p_n.
\eeq
For $\Delta t_n< 
(R/f(t_n))^{2/d}$, using \eqref{eq:poi-ineq-0} with $a=2/\Delta t_n$, the inequality $\log x -1 +1/x\geq 
e^{-1}\log x$ for $x\geq e$, and the fact that $2/\Delta t_n\geq 2(f(t_n)/R)^{2/d} \geq 
e$ and $\Delta_{j,n}\geq e$ for all but finitely many $n$, we obtain 
\beq\label{eq:Fj-prob-2}
\begin{split}
\p(F_{n,j}) &\leq e^{ -2  \Delta_{j,n} 
( \log \frac{2}{\Delta t_n} - 1 + \frac{\Delta t_n}{2})} 
\leq  e^{ - \frac{32}{e} \log (jf(t_n)) \log \frac{2}{\Delta t_n}}  \leq e^{ - \frac{32}{e} \left( \log (jf(t_n)) + \log \frac{2}{\Delta t_n} 
\right) }\\
&
= j^{-\frac{32}{e}} f(t_n)^{-\frac{32}{e}} \left( \frac{\Delta t_n}{2} \right)^{\frac{32}{e}}  \leq C \frac{\Delta t_n}{f(t_n)} j^{-\frac{32}{e}} \leq Cj^{-\frac{32}{e}} 
p_n.
\end{split}
\eeq

Altogether, \eqref{eq:A-C-prob}, \eqref{eq:D-E-prob}, \eqref{eq:F0-prob}, 
\eqref{eq:Fj-prob-1}, and \eqref{eq:Fj-prob-2} show that
\[
\sum_{n=1}^\infty \p\lp A_n\cup B_n\cup C_n\cup D_n\cup E_n \cup 
F_{n,0}\cup \bigcup_{j=1}^\infty F_{n,j}\rp < \infty.
\]
Thus, the Borel--Cantelli lemma implies that only finitely many of these events occur. 

Suppose now that $n_0=n_0(\omega,K)\in\N$ is large enough such that none of these events 
happens for $n\geq n_0$. In particular, there is no jump as described in $A_n$, and fewer 
than $4d$ jumps as in $C_n$. By \eqref{eq:gprop}, each of these jumps 
	contributes to $Y_1(t_n)$ by a term bounded by $(d/(2\pi 
	e))^{d/2} f(t_n)/K$. With the same reasoning, we bound the maximum contribution of a jump as 
	described in the sets $D_n$ and $(F_{n,j})_{j\geq0}$, whereas we use the simple estimate 
	$g(t-\tau_i,\eta_i)\zeta_i \leq (4\pi(t-\tau_i))^{-d/2}\zeta_i$ for those jumps 
	$(\tau_i,\eta_i,\zeta_i)$ that are described in the sets $B_n$ and $E_n$. Hence, we obtain for $n\geq n_0$,
	\begin{align*}
	Y_1(t_n) &\leq (4\pi)^{-d/2} 
	\frac{f(t_n)}{K}+4d \lp \frac{d}{2\pi e}\rp^{d/2} \frac{f(t_n)}{K} + \lp 1+\frac 2 d\rp \delta_0^{-1} \log f(t_n) \frac{f(t_n)^{3/(4d)}}{K}\\
	 &\quad+ \lp 1+\frac 2 d\rp \delta_0^{-1} \log f(t_n) (4\pi)^{-d/2} \frac{f(t_n)^{3/4}}{K}+
	2v_d m_\la(1) f(t_n)^{1/(d\vee 2)}\lp \frac{d}{2\pi e}\rp^{d/2} \\
	&\quad+ 2 \lp\frac{d}{2\pi e}\rp^{d/2}\sum_{j=1}^\infty r_{j,n}\Delta_{j,n}.
	\end{align*}
	
	Since $K$ can be taken arbitrarily large,  \eqref{eq:toshow} follows when we show that
	\beq\label{eq:sum-r-Delta} \sum_{j=1}^\infty r_{j,n}\Delta_{j,n} = O(1) 
	\eeq
	as $n\to\infty$. To this end, observe from \eqref{eq:Delta-n} that if $n$ is large enough such that 
	$f(t_n)\geq 8 \geq e^2$, then 
	\begin{align*}
	\psi(r_{j+1, n}) &= \psi(r_{1,n}) +16 \sum_{k=1}^j  \log (k f(t_n)) \geq 16\int_1^j \log 
	x\,\dd x +16j\log f(t_n)\\ &= 16(j\log(jf(t_n))-j+1) \geq 8 j \log(jf(t_n))
	\end{align*}
	for all $j\geq1$,
	from which we deduce
	\[
	r_{j+1,n } \leq \psi^{-1} ( 8 j \log (jf(t_n)) ),
	\]
	where $\psi^{-1}$ is the inverse function of $\psi$ (recall that $\psi$ is assumed to be strictly decreasing). 
	Since  $8j\log (jf(t_n))-8(j-1)\log((j-1)f(t_n))\geq 8\frac{\log 2}{\log 3}\log((j+1)f(t_n))$ for 
	all $j\geq2$ and $8\log f(t_n)-8\log 2\geq 4 \log(2f(t_n))$ for $f(t_n)\geq 8$, we have by 
	Riemann-sum approximation,
	\begin{align*}
	\sum_{j=1}^\infty r_{j,n} \Delta_{j, n}  &\leq 
	16 \frac{\log f(t_n)}{f(t_n)^{1/(d\vee2)}} + 16 \sum_{j=1}^\infty \log ((j+1)f(t_n)) \psi^{-1}(8 j \log 
	(jf(t_n)) )\\
	&\leq o(1)+ 2\frac{\log 3}{\log 2} \int_{8\log 2}^\infty \psi^{-1}(x)\, \dd x.
	\end{align*}
	Note that by \eqref{eq:psi} and a change of variable,
	\[ \int_{\psi(1)}^\infty \psi^{-1}(x)\,\dd x = -\int_{0}^{1} y\,\psi(\dd y) = v_d 
	\int_0^1 z|\log z|\,\la(\dd z), \]
	which is assumed to be finite.
	Therefore, \eqref{eq:sum-r-Delta} holds and the proposition is proved.
\end{proof}

Next, we show the converse statement.

\begin{proposition} \label{prop:ssconverse} 
	Let 
	$f\colon(0,\infty)\to(0,\infty)$ be a nondecreasing function and $t_n$ be a 
	nondecreasing sequence tending to infinity. Assume that $\la$ satisfies \eqref{eq:cond-lambda}. If \eqref{eq:t-cond-0} (resp., \eqref{eq:t-cond-1}) holds, then
	\[
	\limsup_{n \to \infty} \frac{Y_1(t_n)}{f(t_n)} = \infty \qquad\left(\text{resp.,}\quad \liminf_{n \to \infty} \frac{Y_1(t_n)}{f(t_n)} = -\infty \right) \qquad\text{a.s.}
	\]
\end{proposition}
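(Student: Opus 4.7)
The plan is to prove the limit-superior statement; the limit-inferior version follows by an identical argument applied to the reflected L\'evy measure $z\mapsto\la(-\dd z)$. The strategy is to construct independent events that force $Y_1^+(t_n)\geq K f(t_n)$ for infinitely many $n$, for every $K\in\N$, and then combine this with \eqref{eq:liminfsup} via a sub-subsequence argument as in the proof of Proposition~\ref{prop:rec-close} in order to transfer the statement from $Y_1^+$ to $Y_1=Y_1^{+}+Y_1^{-}$. So I may and will assume $\la((-\infty,0))=0$ and $Y_1=Y_1^+$ from the outset.

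Fix $K>0$ and introduce the triggering set and the associated event
\[
\Gamma_n=\{(s,y,z)\in(t_{n-1},t_n]\times B(1)\times(0,\infty) : g(t_n-s,y)\,z\geq K f(t_n)\},\qquad A_n=\{\mu(\Gamma_n)\geq 1\}.
\]
Because the $s$-projections of the $\Gamma_n$ are pairwise disjoint, the family $(A_n)_{n\in\N}$ is independent, and any single point of $\mu$ in $\Gamma_n$ already yields a contribution of at least $K f(t_n)$ to $Y_1(t_n)$; hence $A_n\subseteq\{Y_1(t_n)\geq K f(t_n)\}$. Writing $\nu=\Leb\otimes\la$ for the intensity measure of $\mu$, one has $\p(A_n)\geq (1-e^{-1})(\nu(\Gamma_n)\wedge 1)$, so once the lower bound
\[
\nu(\Gamma_n)\geq c(K)\,p_n,\qquad p_n=\int_0^\infty \lp\lp\frac{z}{f(t_n)}\rp^{2/d}\wedge\Delta t_n\rp \frac{z}{f(t_n)}\,\la(\dd z),
\]
is established, the hypothesis $\sum_n p_n=\infty$ together with the second Borel--Cantelli lemma yields $\p(A_n\text{ i.o.})=1$, so $\limsup_n Y_1(t_n)/f(t_n)\geq K$ a.s.\ for every $K\in\N$, and the claim follows.

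The main obstacle is the lower bound $\nu(\Gamma_n)\geq c(K)p_n$. For fixed $z>0$ the slice $V_n(z)=\Leb\{(s,y):(s,y,z)\in\Gamma_n\}$ is controlled by Lemma~\ref{lem:hk}: the peak of $g(\cdot,\eta)$ has height $\asymp |\eta|^{-d}$ at $s=|\eta|^2/(2d)$, and $g(s,\eta)$ stays within a constant factor of this peak on a temporal window of length $\asymp |\eta|^2$. Enforcing simultaneously the peak condition $|\eta|^d\lesssim z/(K f(t_n))$ and the window constraint $|\eta|^2\lesssim \Delta t_n$, the effective radius is $\rho_n(z)\asymp(z/(K f(t_n)))^{1/d}\wedge\sqrt{\Delta t_n}$ and $V_n(z)\gtrsim \rho_n(z)^{d+2}$. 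In the peak-limited regime $z\leq C f(t_n)(\Delta t_n)^{d/2}$, this already gives $V_n(z)\asymp (z/f(t_n))^{1+2/d}$, exactly matching the integrand of $p_n$. In the time-limited regime $z>C f(t_n)(\Delta t_n)^{d/2}$, a refinement of the above estimate using the precise Gaussian shape of $g$ produces an additional logarithmic factor $\log^{d/2}(z/(f(t_n)(\Delta t_n)^{d/2}))$; combined with integration by parts on $\int z\,\la(\dd z)$ and the tail decay $\ov\la(M)\lesssim M^{-1-\eps}$ extracted from $\int z^{1+\eps}\,\la(\dd z)<\infty$ in \eqref{eq:cond-lambda}, this suffices to show that the time-limited contribution to $\int V_n(z)\,\la(\dd z)$ remains comparable, up to a constant depending only on $K$, to the time-limited part of $p_n$. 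The $|z\log z|$-integrability in \eqref{eq:cond-lambda} ensures convergence near $z=0$ and closes the estimate.
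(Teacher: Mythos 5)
Your construction shares the overall architecture of the paper's proof (independent events indexed by $n$, second Borel--Cantelli, then the sub-subsequence trick to absorb $Y_1^-$), but the choice of time window $(t_{n-1},t_n]$ in the definition of $\Gamma_n$ creates a genuine gap: the claimed lower bound $\nu(\Gamma_n)\geq c(K)\,p_n$ is false. The problem sits precisely in the ``time-limited'' regime $z>f(t_n)(\Delta t_n)^{d/2}$. There, the full level set $\{(s,y)\in(t_{n-1},t_n]\times B(1): g(t_n-s,y)z\geq Kf(t_n)\}$ has Lebesgue measure of order $(\Delta t_n)^{1+d/2}\bigl(\log\tfrac{z}{Kf(t_n)(\Delta t_n)^{d/2}}\bigr)^{d/2}$ (your ``logarithmic refinement''), while the corresponding term in $p_n$ is $\Delta t_n\,z/f(t_n)$; the ratio of the two behaves like $a(\log(1/a))^{d/2}$ with $a=Kf(t_n)(\Delta t_n)^{d/2}/z\to 0$, and no amount of integration by parts or use of $\overline\la(M)\lesssim M^{-1-\eps}$ recovers the missing polynomial factor, because the factor is genuinely lost when you integrate $V_n(z)$ against $\la(\dd z)$ (take $\la=\delta_{z_0}$, $f(t_n)\approx t_n\approx\log n$, $\Delta t_n\approx 1/n$: then $\sum p_n\asymp\sum 1/(n\log n)=\infty$ but $\sum\nu(\Gamma_n)\asymp\sum\sqrt{\log n}/n^{3/2}<\infty$, so your $A_n$ only occur finitely often). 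The paper's events $G_n$ avoid this exactly because their $s$-windows are \emph{not} confined to $(t_{n-1},t_n]$: they extend back to $t_{n-1}-(z/(Kf(t_{n-1})))^{2/d}$ (hence have length $\geq (z/(Kf(t_n)))^{2/d}\wedge\Delta t_n$, matching the integrand of $p_n$), and are rendered disjoint not by restricting $s$ to a deterministic interval but by the device of subtracting $(z/(Kf(t_{n-1})))^{2/d}$ at the left endpoint.

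A secondary issue, though shared with the paper's slightly informal write-up and easily fixable, is that a point $(s,y,z)\in\Gamma_n$ only lower-bounds $Y_1(t_n)$ if additionally $t_n-s\leq 1$; your constraint $s\in(t_{n-1},t_n]$ does not enforce this when $\Delta t_n>1$. The main defect, however, is the one above: choosing the level set over the interval $(t_{n-1},t_n]$ systematically under-counts the time-limited contribution, and the central estimate $\nu(\Gamma_n)\geq c(K)p_n$ cannot be rescued by the stated refinements.
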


\begin{proof} By symmetry, it suffices to show the assertion under \eqref{eq:t-cond-0}.
Clearly, we may assume that $f$ converges to $\infty$, otherwise we can change $f$ to 
a larger function such that \eqref{eq:t-cond-0} still holds.

	Consider for $n\geq1$ and $K\geq1$ the events
	\[
	\begin{split}
	G_n &= \Bigg\{  \mu \Bigg(  \Bigg\{(s,y,z): s\in \left[ \left( t_n - \frac{2z^{2/d}}{(Kf(t_n))^{2/d}} \right) \vee 
	\left( t_{n-1} - \frac{z^{2/d}}{(Kf(t_{n-1}))^{2/d}} \right), t_n - \frac{z^{2/d}}{(Kf(t_n))^{2/d}}  
	\right],
	\\
	& \quad \qquad y\in B \left( \frac{z^{1/d}}{(Kf(t_n))^{1/d}}, 
	\frac{(2z)^{1/d}}{(Kf(t_n))^{1/d}} \right),~ z> 0 \Bigg)
	\geq 1 \Bigg\}.
	\end{split}
	\]
	Note that the second term of the maximum in the time 
	variable makes these events independent, and therefore we can apply the 
	second Borel--Cantelli lemma.
	Since
	\[
t_n - \frac{z^{2/d}}{(Kf(t_n))^{2/d}}   - \left(t_{n-1} - \frac{z^{2/d}}{(Kf(t_{n-1}))^{2/d}}\right) \geq \Delta t_n, 
	\]
	we have
	\[
	\p (G_n ) \geq \int_0^\infty  \left( \lp\frac{z}{K f(t_n)}\rp^{2/d}  \wedge  \Delta t_n 
	\right)\frac{v_d z}{K f(t_n)} \,\la(\dd z),
	\]
	which is not summable by assumption \eqref{eq:t-cond-0}. Thus, with probability $1$, 
	infinitely many of the events $G_n$ occur. If $G_n$ occurs, then there is at least one 
	jump, say $(\tau,\eta,\zeta)$, with $t_n-\tau\in 
	[(\zeta/(Kf(t_n)))^{2/d},2(\zeta/(Kf(t_n)))^{2/d}]$, $|\eta| \leq 
(2\zeta/(Kf(t_n)))^{1/d}$, and 
	$\zeta>0$. Therefore, from the definition of the heat kernel, we obtain in this case
	\[
	Y^+_1(t_n) \geq \frac{1}{(4\pi\times 2(\zeta/(Kf(t_n)))^{2/d} )^{d/2}} 
e^{-\frac{(2\zeta/(Kf(t_n)))^{2/d}}{4(\zeta/(Kf(t_n)))^{2/d}}}\zeta= 
\frac{e^{-2^{2/d-2}}}{(8\pi)^{d/2}} K f(t_n).
	\]
	As $K$ is as large as we want, we can extract for almost all $\omega$, a subsequence $t_{n_k(\omega)}$ such that
	\[	\lim_{k\to\infty} \frac{Y^+_1(t_{n_k(\omega)})(\omega)}{f(t_{n_k(\omega)})} = \infty.\]
The statement now follows in the same way as explained after \eqref{eq:toshow}.
\end{proof}

\subsection{Old jumps and far jumps}\label{sect:of}

The goal of this subsection is to prove that $Y_2(t)$ in \eqref{eq:Y1Y2} always satisfies 
the SLLN. So whether or not the SLLN holds for $Y_0(t)$, is completely determined by 
whether $Y_1(t)$ has a regular or irregular behavior, which has been investigated in the 
previous subsection.

\bprop\label{prop:small-box} 
Suppose that \eqref{eq:cond-lambda} holds. 
Then $Y_2$ from \eqref{eq:Y1Y2} satisfies
\beq\label{eq:lim-Y-Z}
\lim_{t \to \infty} \frac{Y_2(t)}{t}= m \quad \text{a.s.}
\eeq
\eprop

We first prove the SLLN for the special sequence $t_n=n$. If we have enough moments, this 
follows from standard moment bounds.
\begin{lemma} \label{lem:asYsubseq}
Let $t_n$ be a nondecreasing sequence tending to infinity. 
\benu
\item
If $m_\la(1+2/d) < \infty$, then 
\begin{equation} \label{eq:asYsubseq}
\lim_{n \to \infty} \frac{Y_0(t_n)}{t_n} = m \quad \text{a.s.} 
\end{equation}
holds if for some $\varepsilon > 0$,
\[
\begin{cases}
\displaystyle	\sum_{n=1}^\infty t_n^{\varepsilon -9/4} < \infty, & \text{ for } d =1, \\
\displaystyle	\sum_{n=1}^\infty t_n^{\varepsilon - (1 + 2/d)} < \infty, & \text{ for } d 
\geq 2.
\end{cases}
\]
\item	If we only have $m_\la(\kappa) < \infty$ for some $\kappa \in (1, (1+ 2/d)\wedge 
2)$, then 
(\ref{eq:asYsubseq}) holds if
\[
\sum_{n=1}^\infty t_n^{- (\kappa -1) (1 + d/2)} < \infty.
\]
\eenu
\end{lemma}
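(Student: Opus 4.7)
The plan is to apply the first Borel--Cantelli lemma to the events $\{|Y_0(t_n)-mt_n|>\varepsilon t_n\}$, so the core task is to show their probabilities are summable under the stated hypotheses. Since $\bbe[Y_0(t)]=mt$ as in \eqref{eq:exp}, I centralize and write
\[ Y_0(t)-mt = \int_0^t\int_{\R^d}\int_\R g(t-s,-y)\,z\,(\mu-\nu)(\dd s,\dd y,\dd z), \]
which is a stochastic integral against the compensated Poisson random measure. Then I would invoke a Bichteler--Jacod / Rosenthal-type moment inequality: for $p\in(1,2]$,
\[ \bbe\big[|Y_0(t)-mt|^p\big] \lec m_\la(p)\int_0^t\int_{\R^d} g(t-s,-y)^p\,\dd y\,\dd s, \]
and for $p\geq 2$ the same quantity is bounded by a sum of this term and the Gaussian-type contribution $(m_\la(2)\int g^2)^{p/2}$.

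The key space-time computation is the explicit Gaussian identity
\[ \int_0^t\int_{\R^d} g(t-s,y)^p\,\dd y\,\dd s = C_p\int_0^t (t-s)^{-d(p-1)/2}\,\dd s, \]
which is finite precisely when $p<1+2/d$ and equals $C_p\,t^{\,1-d(p-1)/2}$ in that range. Inserting this into the moment bound, applying Markov's inequality and dividing by $t_n^p$ yields a tail bound of the form
\[ \bbp\lp\lv\frac{Y_0(t_n)}{t_n}-m\rv>\varepsilon\rp \lec t_n^{-r(p)}, \]
where the effective exponent is $r(p)=(p-1)(1+d/2)$ coming from the $L^p$-term, with an additional contender $3p/4$ coming from the $L^2$-term that is only relevant for $d=1$ and $p\in[2,3)$.

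For part~(2) I take $p=\kappa\in(1,2)$ directly, which produces the exponent $(\kappa-1)(1+d/2)$ matching the stated hypothesis. For part~(1) with $d\geq 2$, the admissible range is $p<1+2/d\leq 2$, and letting $p$ approach $1+2/d$ gives the exponent $1+2/d$ up to an arbitrarily small $\varepsilon$-loss. For $d=1$, the assumption $m_\la(3)<\infty$ opens up the range $p<3$; here the $L^2$-piece dominates as $p\to 3$, yielding the exponent $3p/4\to 9/4$ and accounting for the asymmetric $9/4$ in the hypothesis. In each case, the integrability condition imposed on $(t_n)$ is exactly what Borel--Cantelli needs, and letting $\varepsilon\downarrow 0$ along a countable sequence concludes the almost-sure statement.

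The main technical obstacle I anticipate is the bookkeeping in the $d=1$ regime: the Rosenthal-type bound splits into an $L^2$-piece of order $t^{p/4}$ and an $L^p$-piece of order $t^{(3-p)/2}$, and one must verify which one dictates the sharp rate for each admissible $p$, in particular that the $L^2$-piece wins as $p\to 3$. Otherwise the argument is a routine combination of moment inequalities for compensated Poisson integrals, the explicit form of the heat kernel, and Borel--Cantelli.
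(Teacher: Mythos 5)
Your proposal is correct and follows essentially the same approach as the paper: both centre the solution, write it as a compensated Poisson integral, apply a Bichteler--Jacod/Rosenthal-type (predictable BDG) moment bound, use the explicit computation $\int_0^t\int_{\R^d} g^p = C_p\,t^{1-d(p-1)/2}$ for $p<1+2/d$, and conclude via Markov and Borel--Cantelli with exactly the same parameter choices ($\alpha=2$, $p=3-\delta$ for $d=1$; $\alpha=p=1+2/d-\delta$ for $d\geq2$; $\alpha=p=\kappa$ for part (2)). Your observation that the $L^2$-piece is the bottleneck as $p\to3$ in dimension one, producing the asymmetric exponent $9/4$, matches the paper's computation.
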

\begin{proof}
Simple computation gives
\[
\int_0^t \int_{\R^d} g(s, y)^p \,\dd y \,\dd s =
(4 \pi)^{\frac{d}{2}(1-p)} p^{-d/2} \frac{2}{2 - d(p-1)} t^{1 - \frac{d}{2}(p-1)},
\]
when $0<p < 1 + 2/d$.
Using predictable versions of the classical Burkholder--Davis--Gundy inequalities (see  
\cite[Theorem 1]{MR}) we have for any $\alpha \in [1,2]$,
\begin{equation} \label{eq:MR-ineq}
\begin{split}
& \E[| Y_0(t) - m t | ^p] \\ 
&\quad \leq 
\begin{cases}
c_{\alpha, p} \displaystyle
\left( \int_0^t \int_{\R^d} \int_{\R} |g(s,y)z|^\alpha\,
\lambda( \dd z) \, \dd y\, \dd s\right)^{p/\alpha}, &
p\leq\al, \\
c_{\alpha, p} \lp\displaystyle
\left( \int_0^t \int_{\R^d} \int_\R |g(s,y)z|^\alpha \lambda( \dd z) \, \dd y\, \dd s 
\right)^{p/\alpha}
+ \int_0^t \int_{\R^d} \int_\R |g(s,y)z|^p \lambda( \dd z) \, \dd y\, \dd s\rp, &
p>\al,
\end{cases}
\end{split}
\end{equation}
with some constant $c_{\alpha, p}\in(0,\infty)$. For $d = 1$, choosing $\alpha = 2$, 
$p = 3 - \delta$, and $0<\delta < 1$, we obtain 
\[
\p ( |Y_0(t) - m t| > \varepsilon t) \leq 
\frac{ \E [|Y_0(t) - m t|^p] }{\varepsilon^p t^p }\leq
C t^{-9/4 + 3 \delta / 4}.
\]
For $d \geq 2$, choosing $\alpha = p = 1 + 2/d - \delta$ gives
\[
\p ( |Y_0(t) - m t| > \varepsilon t )  \leq 
\frac{ \E [|Y_0(t) -  m t|^p] }{\varepsilon^p t^p }  \leq C t^{-(1 + 2/d) + \delta (1 + 
d/2)}.
\]
If we choose $\delta > 0$ small enough, the statement follows from the first 
Borel--Cantelli lemma.

If only $\int_{\R} |z|^\kappa \lambda(\dd z) < \infty$ for some 
$\kappa \in (1, (1+ 2/d) \wedge 2)$,  choose $\alpha = p = \kappa$ 
and the statement follows similarly.
\end{proof}

In particular, if $m_\la(\kappa)<\infty$ for some $\kappa>(4+d)/(2+d)$, the SLLN holds 
for the sequence $n$. Actually, a much weaker condition suffices, but then the proof 
becomes more involved.
\blem\label{lem:n} If $m_\la(1+\eps)<\infty$ for some $\eps>0$, then for any $a>0$, 
\eqref{eq:asYsubseq} holds on the sequence $t_n=an$.
\elem
\bpr
For notational simplicity, let us take $a=1$. We decompose the heat kernel as 
$g(t,x)=g_1(t,x)+g_2(t,x)$ such that $0\leq g_1,g_2\leq g$, $g_1(t,x) = 0$ for $t\geq1$ 
or 
$|x|\geq1$, $g_1(t,x)=g(t,x)$ for $t\leq 1/2$ and $|x|\leq 1/2$, $g_1$ is smooth on 
$(0,\infty)\times \R^d$, and $g_2$ is smooth on $[0,\infty)\times\R^d$ (including the 
origin $(t,x)=0$). Accordingly, we define
\begin{align*}
Y^{(1)}(t)&=\int_0^t\int_{\R^d}\int_\R g_1(t-s,y)z\, \mu(\dd s,\dd y,\dd z),\\ 
Y^{(2)}(t)&=m_0t+\int_0^t\int_{\R^d}\int_\R  g_2(t-s,y)z\,\mu(\dd s,\dd y,\dd z),
\end{align*}
such that $Y_0= Y^{(1)}+Y^{(2)}$.
By Example \ref{ex:1} (1), the series in \eqref{eq:t-cond-0} is finite. Thus,
applying Proposition~\ref{prop:rec-close} to the Lévy noise obtained by replacing all 
negative jumps of $\La$ by positive jumps of the same absolute value, we derive that 
$$
|Y^{(1)}(n)| \leq \sum_{\tau_i \in (n-1,n],\, |\eta_i| \leq 1 } g(n - \tau_i, 
\eta_i)|\zeta_i| = o(n) \quad \text{a.s.}
$$
Hence, by the second part of Lemma~\ref{lem:asYsubseq}, we have 
$$
\lim_{n\to\infty} 
\frac{Y^{(2)}(n^p)}{n^p} = \lim_{n\to\infty} \frac{Y_0(n^p)}{n^p} = m\quad\text{a.s.}
$$
if $p>2\eps^{-1}/(d+2)$.

For the subsequent argument, upon considering the drift, the positive and the negative 
jumps separately, we may assume without loss of generality that $\La$ only has positive 
jumps and that $m_0=0$. Then, given $n\in\N$, we choose $k\in\N$ such that 
$k^p\leq n \leq (k+1)^p$, and derive  
\beq\label{eq:Y2n} \frac{Y^{(2)}(n)}{n} \leq \frac{Y^{(2)}(n)}{k^p} = \frac{(k+1)^p}{k^p} 
\lp \frac{Y^{(2)}((k+1)^p)}{(k+1)^p}-\frac{Y^{(2)}((k+1)^p)-Y^{(2)}(n)}{(k+1)^p} \rp. \eeq
Since $g_2(t-s,y)= g_2(0,y)+\int_s^t \partial_t g_2(r-s,y)\,\dd r=\int_s^t \partial_t 
g_2(r-s,y)\,\dd r$, we have 
\beq\label{eq:Y2}
Y^{(2)}(t) = \int_0^t \int_{\R^d} \int_0^\infty 
\lp\int_s^t \partial_t g_2(r-s, y) \,  \dd r\rp z\,   \mu(\dd s, \dd y, \dd z).  
\eeq

By Lemma~\ref{lem:hk} and the defining properties of $g_2$, we have 
\begin{equation} \label{eq:g2-bound}
\begin{split}
|\partial_t g_2(t,x)|&\leq C,\quad (t,x)\in [0,\infty)\times\R^d, \\
|\partial_t g_2(t,x)| &= |\partial_t g(t,x)| \leq 
\begin{cases} 
C|x|^{-d-2} &\text{if } |x|^2> 2dt, \text{ and } t>1 \text{ or } |x|>1, \\
C\displaystyle t^{-d/2-1}  &\text{if } |x|^2\leq 2dt, \text{ and } t>1 \text{ or } |x|>1.
\end{cases}
\end{split}
\end{equation}
Using these bounds, straightforward calculation shows that
\[
\int_0^t \int_{\R^d} \int_0^\infty 
 \int_s^t |\partial_t g_2(r-s, y)|z \,  \dd r \,
\dd s \, \dd y\, \lambda(\dd z) < \infty, 
\]
which implies that almost surely, $|\partial_t g_2(r-s, y)| z$ is $(\dd r \otimes \dd \mu)$-integrable. Therefore, we can apply Fubini's theorem to \eqref{eq:Y2} and obtain
\[
Y^{(2)}(t)= \int_0^t \lp \int_0^r\int_{\R^d}\int_0^\infty \partial_t 
g_2(r-s,y)z\,\mu(\dd s,\dd y,\dd z)\rp\,\dd r.
\]
Thus, for any $\delta>0$ and $q=(1+\eps)\wedge 2$, 
\beq\label{eq:prob} \begin{aligned}
&\bbp\lb \sup_{n\in[k^p,(k+1)^p]} \lv \frac{Y^{(2)}((k+1)^p)-Y^{(2)}(n)}{(k+1)^p} \rv  > 
\delta\rb\\
&\qquad\leq \bbp\lb \frac{1}{(k+1)^p} \int_{k^p}^{(k+1)^p} \lv \int_0^r 
\int_{\R^d}\int_0^\infty \partial_t g_2(r-s,y)z\,\mu(\dd s,\dd y,\dd z)  \rv \,\dd r  > 
\delta\rb\\
&\qquad\leq \frac{1}{\delta^q(k+1)^{pq}} \bbe\lb \lv \int_{k^p}^{(k+1)^p} \lv \int_0^r 
\int_{\R^d}\int_0^\infty \partial_t g_2(r-s,y)z\,\mu(\dd s,\dd y,\dd z) \rv \,\dd r \rv 
^q 
\rb\\
&\qquad\leq \frac{C}{k^{p+q-1}} \int_{k^p}^{(k+1)^p} \bbe\lb  \lv \int_0^r 
\int_{\R^d}\int_0^\infty \partial_t g_2(r-s,y)z\,\mu(\dd s,\dd y,\dd z) \rv^q\rb \,\dd r,
\end{aligned}
\eeq
where we used Jensen's inequality on the $\dd r$-integral to pass to the last line.

Using \cite[Theorem~1]{MR} (with $\alpha  = q$) on the compensated version of the Poisson integral in the last line of \eqref{eq:prob}, the hypothesis $m_\lambda(q) < \infty$, and the fact 
that $\partial_t g_2$ is a bounded function, we can bound the last expectation by a 
constant times
\beq\label{eq:qmom}\begin{aligned}
	&\int_0^r \int_{\R^d}\int_0^\infty |\partial_t g_2(r-s,y)z|^q\,\la(\dd z)\,\dd y\,\dd s + \lv\int_0^r \int_{\R^d}\int_0^\infty \partial_t g_2(r-s,y)z\,\la(\dd z)\,\dd y\,\dd s\rv^q\\
	&\qquad\leq C\lp m_\la(q) + m_\la(1)^q\rp\lp \lp \int_0^r \int_{\R^d} |\partial_t g_2(s,y)| \,\dd y\,\dd s\rp^q\vee 1\rp\\
	&\qquad\leq C\lp\lp\int_0^r \int_{\R^d} |\partial_t g_2(s,y)| \,\dd y\,\dd s\rp^q\vee 1\rp.
\end{aligned}\eeq
Therefore, by \eqref{eq:g2-bound},
\begin{align*}
\int_0^r\int_{\R^d} |\partial_t g_2(s,y)| \,\dd y\,\dd s 
&\leq C \lp \int_0^1 \int_{|y|\leq \sqrt{2d}} 1\,\dd y\,\dd s+ \int_0^1 \int_{|y|>\sqrt{2d}} |y|^{-d-2} \,\dd y\,\dd 
s\right.\\
&\quad\left.+\int_1^r \int_{|y|> \sqrt{2ds}} |y|^{-d-2}\,\dd y\,\dd s+\int_1^r \int_{|y|\leq \sqrt{2ds}} s^{-d/2-1} \,\dd 
y\,\dd s \rp\\
&\leq C\lp 1+\int_1^r s^{-1}\,\dd s \rp\leq C\log^+ r.
\end{align*}
 
Inserting this back into \eqref{eq:qmom} and \eqref{eq:prob}, we conclude that 
\[ 
\bbp\lb \sup_{n\in[k^p,(k+1)^p]} \lv \frac{Y^{(2)}((k+1)^p)-Y^{(2)}(n)}{(k+1)^p} \rv  > 
\delta\rb \leq \frac{C}{k^{p+q-1}} \int_{k^p}^{(k+1)^p} (\log^+ r)^q \,\dd r \leq 
Ck^{-q} (\log^+ k)^q, 
\]
which is summable in $k$. The Borel--Cantelli lemma implies that almost surely,
\[
\sup_{n\in[k^p,(k+1)^p]} \lv \frac{Y^{(2)}((k+1)^p)-Y^{(2)}(n)}{(k+1)^p} \rv 
\longrightarrow 0 
\]
as $k\to\infty$. Thus, by \eqref{eq:Y2n}, 
\[ \limsup_{n\to\infty} \frac{Y^{(2)}(n)}{n} \leq m\quad\text{a.s.} \]
The reverse relation for the limit inferior is shown similarly.
\epr

\bpr[Proof of Proposition~\ref{prop:small-box}] 
By separating drift, positive jumps and negative jumps, it suffices to consider the case 
$m_0=0$ and $\la((-\infty,0))=0$.  Let $\theta > 0$
and choose $\delta > 0$ such that $\delta/(1-\delta) \leq \delta(\theta)$, where 
$\delta(\theta)$ is given in Lemma \ref{lem:heat-kernel-ineq}. Furthermore, define $s_n = 
\delta n$ and $k=\lfloor t/\delta\rfloor$ such that $s_k \leq t < s_{k+1}$. 
Because $(t-s_k)/(s_k-\tau_i) \leq (t - s_k)/(s_k - (t - 1)) \leq \delta/(1-\delta) \leq 
\delta(\theta)$ if $\tau_i\leq t-1$, and $|t-s_k|\leq \delta\leq \delta(\theta)$, we 
derive from
Lemma~\ref{lem:heat-kernel-ineq}, 
\begin{equation} \label{eq:I1-1-gen}
\begin{split}
Y_2(t)&= \sum_{\tau_i \leq t -1 } g(t - \tau_i, \eta_i)\zeta_i +
\sum_{\tau_i \in (t-1,t],\,|\eta_i|>1 } g(t - \tau_i, \eta_i)\zeta_i  \\
&\geq (1- \theta) 
\sum_{\substack{\tau_i \leq t - 1\,\text{or}\\ \tau_i \in (t-1,s_k],\,|\eta_i|>1}} 
g(s_k - \tau_i, \eta_i) \zeta_i \\
&= (1-\theta) \lp Y_0(s_k) -\sum_{\tau_i \in(t-1,s_k],\,|\eta_i|\leq 1} 
g(s_k-\tau_i,\eta_i)\zeta_i  \rp.
\end{split}
\end{equation}
The last sum is bounded by $Y_1(s_k)$, which is $o(s_k)$ by 
Proposition~\ref{prop:rec-close}. Similarly, we have
\begin{equation} \label{eq:I1-2-gen}
Y_0(s_{k+1}) \geq (1-\theta) Y_2(t) +Y_1(s_{k+1}).
\end{equation}
Combining \eqref{eq:I1-1-gen} and \eqref{eq:I1-2-gen} with 
Lemma~\ref{lem:n}, we obtain \eqref{eq:lim-Y-Z}.
\epr

\subsection{Proof of the main results}\label{sect:mr}

\begin{proof}[Proof of Theorem~\ref{thm:cont}.] For the first part, observe that we have $\int_1^\infty 1/(f(t)\vee t)\,\dd t =\infty$ by Lemma~\ref{lem:simple}. So Proposition~\ref{prop:Y1-cont} implies that 
	\[ \limsup_{t\to\infty} \frac{Y_1(t)}{f(t)\vee t} = \infty \qquad \lp \text{resp.,}\quad \limsup_{t\to\infty} \frac{Y_1(t)}{f(t)\vee t}=-\infty \rp. \] Moreover, by Proposition~\ref{prop:small-box}, 
	\[ \limsup_{t\to\infty} \lv \frac{Y_2(t)}{f(t)\vee t}\rv <\infty, \]
which implies the claim of Theorem~\ref{thm:cont} (1) for the function $f(t)\vee t$, and hence also for $f(t)$.

Next, the second part of theorem is an easy consequence of fact that $t/f(t)\to 0$ by Lemma~\ref{lem:simple} together with Propositions~\ref{prop:Y1-cont} and \ref{prop:small-box}. For the third 
part, let us assume that $\la((-\infty,0))=0$ (the proof in the case $\la((0,\infty))=0$ 
is analogous). 
Then Proposition~\ref{prop:small-box} combined with 
\eqref{eq:liminfsup} implies the statement.
\end{proof}

\begin{proof}[Proof of Theorem~\ref{thm:WLLN}]
We use the moment bound \eqref{eq:MR-ineq}.
For $p \leq 1$ choose $\alpha \in ( 1, 1 + ( \varepsilon \wedge 2/d ) )$, for 
$p \in (1, (1 + \varepsilon) \wedge 2] \cap (1, 1 + 2/d)$ choose $\alpha = p$, while for 
$p \in (2,3) \cap (2, 1 + \varepsilon]$ choose $ \alpha = 2$. Then, we obtain
\[
\frac{ \E [|Y_0(t) -  m t|^p] }{t^p }  \leq 
\begin{cases} 
C t^{- (1-\alpha^{-1}) p ( 1 + \frac{d}{2} ) } & \text{if } p \leq 1,\\
Ct^{-(p-1)(1+\frac d2)} & \text{if } p\in (1,(1+\eps) \wedge 2] \cap (1,1+\frac 2d),\\ C(t^{-\frac 34 p}+t^{-\frac 32(p-1)}) & \text{if } d=1 \text{ and } p\in (2,1+\eps]\cap(2,3),\end{cases}  \]
	and the theorem follows.
\end{proof}

\begin{proof}[Proof of Theorem~\ref{thm:Yf-ds}.] 
For the first part of the theorem, let us assume \eqref{eq:t-cond-0} and write
\[
\frac{Y_0(t_n)}{f(t_n)} = \frac{Y_1(t_n)}{f(t_n)} + 
\frac{Y_2(t_n)}{t_n} \frac{t_n}{f(t_n)}. 
\]
By Proposition~\ref{prop:small-box} and the assumption 
$\liminf_{n \to \infty} f(t_n)/t_n > 0$, the second term is bounded, while the first one is not by
Proposition \ref{prop:ssconverse}. For the second part, if
the series in \eqref{eq:t-cond-0} 
converges, Proposition~\ref{prop:rec-close} and \ref{prop:small-box} imply that 
\[ \limsup_{n\to\infty} \frac{Y_0(t_n)}{f(t_n)} = \lim_{n\to\infty} 
\frac{Y_2(t_n)}{f(t_n)} = m\kappa\quad \text{a.s.} \]
\end{proof}

\begin{remark}\label{rem:growth}
	Because of the linear growth of the expectation, it is natural to consider only functions 
	$f$ for which $\liminf_{n \to \infty} f(t_n) / t_n > 0$ in the first part of Theorem~\ref{thm:Yf-ds}. Assuming that $m_\lambda(1+2/d) < 
	\infty$, it is easy to see that for the sequence $t_n = n^p$ with $p \in (d/(d+2), 
	d/(d+1))$, we have by Propositions \ref{prop:rec-close} and \ref{prop:ssconverse} and
	Corollary \ref{cor:2}, 
	\[
	\limsup_{n \to \infty} \frac{Y_1(t_n)}{\sqrt{t_n}} = \infty, \quad 
	\limsup_{n \to \infty} \frac{Y_1(t_n)}{t_n} = 0.
	\]
	Therefore, with $f(t) = \sqrt{t}$, we obtain
	\[
	\frac{Y_0(t_n)}{\sqrt{t_n}} =
	\sqrt{t_n} \left( \frac{Y_1(t_n)}{t_n} + \frac{Y_2(t_n)}{t_n} \right),
	\]
	and the limit is either $+\infty$ or $-\infty$, depending on the sign of the mean $m$. So in this example, the large time behavior is dominated by the mean and not (only) by the jumps of the noise.
\end{remark}

\begin{proof}[Proof of Theorem~\ref{thm:cs-g}.]
Let $Y^\pm_0(t) = \sum_{\pm \zeta_i>0} g(t-\tau_i,\eta_i)\zeta_i$ (\emph{without} the 
multiplicative nonlinearity $\si$). Furthermore, recall the meaning of the constants in 
\eqref{eq:si}. Then, in order to prove (1), we simply bound ($C=k_1$ if $m_0\geq0$, and 
$C=k_2$ if $m_0<0$)
\beq\label{eq:greater} \frac{Y_0(t)}{f(t)} \geq k_1 \frac{Y^+_0(t)}{f(t)} 
+ k_2 \frac{Y^-_0(t)}{f(t)} +Cm_0\frac{t}{f(t)}. \eeq
Thus, if $\la((0,\infty))>0$ and $\int_1^\infty 1/f(t) \,\dd t= \infty$, we have 
\[ \limsup_{t\to\infty} \frac{Y_0(t)}{f(t)} =\infty\]
by the corresponding statement in the additive case (Theorem~\ref{thm:cont}). The claim on the limit inferior if $\la((-\infty,0))>0$ holds by symmetry.

Conversely, if $\int_1^\infty 1/f(t)\,\dd t<\infty$, then 
\beq\label{eq:help2} \lv \frac{Y_0(t)}{f(t)}\rv \leq |m_0|k_2\frac{t}{f(t)}+k_2\frac{Y^+_0(t)+|Y^-_0(t)|}{f(t)} \longrightarrow 0\quad\text{a.s.} \eeq
by Theorem~\ref{thm:cont}.

For (2), it suffices to bound $Y_0(t)/t \leq m_0C' +k_1Y^-_0(t)/t \leq m_0 C'$ (resp., $Y_0(t)/t \geq m_0C +k_1Y^+_0(t)/t \geq m_0 C$), where $C=k_1$ and $C'=k_2$ if $m_0\geq0$, and $C=k_2$ and $C'=k_1$ if $m_0<0$.
\end{proof}

\begin{proof}[Proof of Theorem~\ref{thm:WLLN-g}]
	Use the first estimate in \eqref{eq:help2} with $f(t)=t$ and then Theorem~\ref{thm:WLLN}.
\end{proof}

\begin{proof}[Proof of Theorem~\ref{thm:ds-g}.] If \eqref{eq:t-cond-0} holds, \eqref{eq:aux2} follows from the estimate \eqref{eq:greater}, the fact that we have $\limsup_{n\to\infty} Y^+_0(t_n)/f(t_n) = \infty$ by Theorem~\ref{thm:Yf-ds}, and the independence of $Y^+_0$ and $Y^-_0$ (cf.\ the argument given after \eqref{eq:help}). The same reasoning applies if \eqref{eq:t-cond-1} holds. For the second statement of Theorem~\ref{thm:ds-g}, let us only consider the case where the series in \eqref{eq:t-cond-0} is finite. Then the assertion is deduced from the bound $Y_0(t_n)/f(t_n)\leq m_0C't_n/f(t_n) + k_2Y^+_0(t_n)/f(t_n)$ and Theorem~\ref{thm:Yf-ds} (2) applied to $Y^+_0$.
\end{proof}

\begin{proof}[Proof of Theorem~\ref{thm:B}] 
The theorem follows from the general asymptotic theory of Gaussian processes 
in \cite{Watanabe70}. Let $v^2(t)=\bbe[Y_0(t)^2]$ and $\rho(t,s) = 
\bbe[Y_0(t)Y_0(s)]/(v(t)v(s))$ be the variance and the correlation function of $Y_0$, 
respectively. Then by \cite[Lemma~2.1]{Swanson07}, 
\[ v^2(t)=\sqrt{\frac t {2\pi}},\quad 
\rho(t,t+h)=\frac{\sqrt{2+h/t}-\sqrt{h/t}}{(4(1+h/t))^{1/4}},\quad t>0,~h\geq0. \]
Notice that \cite{Swanson07} considers the heat equation with a factor $1/2$ in front of 
the Laplacian, but the moment formulae can be easily transformed to our situation by a 
scaling argument. From these identities, it is easy to verify that 
\[ \frac{1-\rho(t,t+h)}{(h/t)^{1/2}} \leq \frac1{\sqrt{2}}, \quad \rho(t,ts)\log s = 
\frac{\sqrt{s+1}-\sqrt{s-1}}{(4s)^{1/4}}\log s,\quad t>0,~h\geq0,~s>1,\]
which means that condition (C.1) with $\al=1/2$ as well as condition (C.2) in 
\cite{Watanabe70} are satisfied. Also (C.1') in this reference holds true because 
$\rho(t,t+h)\leq 1-\frac12 (h/t)^{1/2}$ for small values of $h/t$, and $\rho(t,t+h)$ 
decreases in $h/t$. 

Therefore, \cite[Theorem~6]{Watanabe70} is applicable, and by considering the functions 
$\psi(t)=\sqrt{K\log\log t}$, with $K>0$, we deduce that almost surely
\[ \limsup_{t\to\infty} \frac{Y_0(t)}{\sqrt{\sqrt{\frac{t}{2\pi}}K\log\log t}} \ 
\begin{cases} \leq 1&\text{for } K>2,\\ \geq 1&\text{for } K\leq2,\end{cases} \]
from which the statement of Theorem~\ref{thm:B} follows.
\end{proof}

\medskip
\noindent \textbf{Acknowledgments.}
We are grateful to an anonymous referee for useful suggestions that have led to several improvements of the article. Furthermore, we would like to thank Davar Khoshnevisan for discussing the case of multiplicative Gaussian noise with us and for pointing out the reference \cite{Bertini99} to us.
CC thanks the Bolyai Institute at the University of Szeged for its hospitality during his visit.
PK's research was supported by the János Bolyai Research Scholarship of the 
Hungarian Academy of Sciences, by the NKFIH grant FK124141, and 
by the EU-funded Hungarian grant EFOP-3.6.1-16-2016-00008.

\bibliographystyle{plain}
\bibliography{almost-sure} 
\end{document}